\newtheorem{thm}{Theorem}[section]
\newtheorem{cor}[thm]{Corollary}
\newtheorem{prop}[thm]{Proposition}
\newtheorem{lemma}[thm]{Lemma}
\theoremstyle{definition}
\newtheorem{definition}[thm]{Definition}
\def\X0{X^{\circ}}
\def\Y0{Y^{\circ}}
\numberwithin{equation}{section}       % Number formulas within sections
\begin{document}

\title[Positive rational nodal leaves on surfaces]
{Positive rational nodal leaves on surfaces}

%\author[J.V. Pereira]{Jorge Vit\'{o}rio PEREIRA}
%\address{IMPA, Estrada Dona Castorina, 110, Horto, Rio de Janeiro,
%Brasil}
%\email{jvp@impa.br}

\author[E. A. Santos]{Edileno de Almeida SANTOS}
\address{IMPA, Estrada Dona Castorina, 110, Horto, Rio de Janeiro,
Brasil}
\email{edileno@impa.br}

\subjclass{37F75} \keywords{Foliations, Invariant Curves, Birational Geometry}

\thanks{The author is supported by CNPq}

\begin{abstract}
We consider singular holomorphic foliations on compact complex surfaces with invariant rational nodal curve of positive self-intersection. Then, under some assumptions, we list all possible foliations.
\end{abstract}

\maketitle

\setcounter{tocdepth}{1}
\sloppy
%\tableofcontents

\section{Introduction}\label{S:Introduction}

 Let $X$ be a compact complex surface and $\mathcal F$ a  codimension one singular holomorphic foliation on it. This work aims at generalizing the following result of Brunella (see \cite{Brunella} and \cite{Brunella2}):

%If $C$ is a positive rational nodal curve invariant by $\mathcal F$ such that $C^2=3$, then the foliation is completely determined, birational to the {\it Brunella's very special foliation} (see the next section for the definition of this foliation). More precisely \cite[Chapter 4, Proposition 3]{Brunella}:

\begin{thm}\label{T:Brunella}
Let $\mathcal F$ be a foliation on a compact complex surface $X$ and let $C\subset X$ be a rational curve with a node $p$, invariant by $\mathcal F$, and with $C^2=3$. Suppose that $p$ is a reduced nondegenerate singularity of $\mathcal F$, and that it is the unique singularity of $\mathcal F$ on $C$. Then $\mathcal F$ is unique up to birational transformations.
\end{thm}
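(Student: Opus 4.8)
The plan is to determine the local model of $\mathcal F$ at the node, prove that $X$ is rational, compute the numerical invariants of $\mathcal F$ along $C$, and then reconstruct $\mathcal F$ up to birational transformation from the transverse structure it inherits near $C$.

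\emph{Step 1: the local model at $p$.} First I would apply the Camacho--Sad index theorem to the invariant curve $C$. Since $p$ is the only singularity of $\mathcal F$ lying on $C$, a single term survives and $\mathrm{CS}(\mathcal F,C,p)=C^{2}=3$. At $p$ the curve $C$ has two transverse local branches $C_{1},C_{2}$, which are exactly the two separatrices of the reduced nondegenerate singularity; in the formal normal form $r\,x\,dy-y\,dx$ the two branches carry Camacho--Sad indices $r$ and $r^{-1}$, while the node itself contributes an additional $2(C_{1}\cdot C_{2})_{p}=2$. Hence $r+r^{-1}+2=3$, i.e. $r^{2}-r+1=0$, so $r$ is a primitive sixth root of unity. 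In particular $r\notin\mathbb R_{\le 0}$ and there is no resonance between the two eigenvalues, so the singularity lies in the Poincar\'e domain and is analytically linearizable; the holonomy of the leaf $C\setminus\{p\}\cong\mathbb C^{*}$ is then linear of order six. This rigidity is forced by the numerology $C^{2}=3$ and is what makes the conclusion plausible.

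\emph{Step 2: rationality and numerical invariants.} Since $C^{2}>0$, the surface $X$ is projective, and adjunction on the arithmetic-genus-one nodal curve $C$ gives $K_{X}\cdot C=-C^{2}=-3<0$, so $X$ is uniruled; a rational curve of positive self-intersection cannot lie in a fibration over a curve of positive genus, hence $X$ is rational. Blowing up the node, $\pi\colon\widetilde X\to X$, the exceptional curve $E$ is invariant ($p$ is non-dicritical), $E^{2}=-1$, the strict transform $\widetilde C$ is a smooth rational curve with $\widetilde C^{2}=-1$ and $\widetilde C\cdot E=2$, and $\sing(\widetilde{\mathcal F})\cap(\widetilde C\cup E)$ consists of the two points $\widetilde C\cap E$, each reduced nondegenerate with eigenvalue ratio a sixth root of unity. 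From $N_{\widetilde{\mathcal F}}=\pi^{*}N_{\mathcal F}$ and $N_{\widetilde{\mathcal F}}\cdot\widetilde C=\widetilde C^{2}+\sum Z(\widetilde{\mathcal F},\widetilde C,\cdot)=-1+1+1=1$ one gets $N_{\mathcal F}\cdot C=1$, and then $K_{\mathcal F}\cdot C=-2$ from $K_{\mathcal F}=N_{\mathcal F}+K_{X}$. So $\mathcal F$ carries a ``positive'' invariant curve: $C^{2}>0$, $N_{\mathcal F}\cdot C>0$, $K_{\mathcal F}\cdot C<0$.

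\emph{Step 3: reconstruction and uniqueness.} I would then use positivity to go from the local data at $p$ to a global normal form. On one hand $C^{2}=3>0$ makes the linear system $|C|$ large and pins down the birational type of the pair $(X,C)$; contracting the invariant $(-1)$-curves that carry at most one singularity produces a well-defined relatively minimal model, and the positivity controls the number and position of the blow-ups required to create the single node. On the other hand the finite, order-six holonomy of $C\setminus\{p\}$ allows the transverse structure of $\mathcal F$ near $C$ to be extended globally (for instance by passing to the associated $\mathbb Z/6$-cover branched along $C$, on which the pulled-back foliation becomes transversely simple near the preimage of $C$). Combining the two, $\mathcal F$ is identified, up to birational transformation, with one explicit foliation $\mathcal F_{0}$, and the prescribed local model at $p$ leaves no further freedom; any two foliations satisfying the hypotheses therefore have the same relatively minimal model $\mathcal F_{0}$ and are birationally equivalent, which is the assertion. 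The main obstacle is exactly this last step: the hypotheses control $\mathcal F$ only along $C$, so one must exploit $C^{2}=3>0$ in an essential way to control $\mathcal F$ away from $C$ --- its singularities, the blow-ups separating them from $C$, and the global transverse structure --- and so promote the local normal form to a birational rigidity statement; the identities $N_{\mathcal F}\cdot C=1$, $K_{\mathcal F}\cdot C=-2$ and the $(-1)$-curve configuration from Step 2 are the main tools for this.
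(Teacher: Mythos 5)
Your Step~3 is exactly where the whole content of the theorem sits, and it is not carried out: ``positivity pins down the birational type of $(X,C)$'', ``contracting the invariant $(-1)$-curves produces a well-defined minimal model'' and ``the transverse structure extends globally'' are statements of intent, not arguments. Worse, the one concrete mechanism you propose for globalizing rests on a false premise. Since $\lambda=\frac{1\pm\sqrt{-3}}{2}$ is non-real, the holonomy generator of the leaf $C\setminus\{p\}$ has linear part $e^{2\pi i\lambda}$ of modulus $\neq 1$: the holonomy group is infinite cyclic and hyperbolic, hence linearizable, but certainly not ``linear of order six'', so there is no $\mathbb Z/6$-cover branched along $C$ attached to it. The actual proof (Brunella's, reproduced in this paper for the case $C^2=3$) runs differently: the hyperbolic holonomy and the Poincar\'e linearization at $p$ let one define $\mathcal F$ near $C$ by closed logarithmic $1$-forms, whence $L=N_{\mathcal F}^{*}\otimes\mathcal O_X(C)$ is torsion of order $k=3$ on a neighbourhood $U$ of $C$ (because $-\lambda$ is a primitive \emph{cube} root of unity); a nonvanishing section of $L^{\otimes 3}|_U$ is extended meromorphically to all of $X$ using strong pseudoconvexity of $X-C$ (Lemmas \ref{Extension} and \ref{Pseudoconvexity}), and from it one builds a $3$-fold cyclic covering $f:Z\to X$, \emph{unramified} over $U$ (Proposition \ref{P:Covering}), in which $f^{-1}(C)$ is a cycle of three smooth rational curves of self-intersection $C^2-2=1$ permuted by the deck transformation; finally the linear system of that cycle maps $Z$ birationally to $\mathbb P^2$, the lifted foliation is the degree-one linear foliation $\mathcal L$, the order-three automorphism is conjugated to $\gamma$, and the quotient is identified with $\mathcal F_3$ (Proposition \ref{P:3-cycle}). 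None of these steps is recoverable from your outline, so the uniqueness claim is not established.

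There is also a numerical slip in Step~2 that undercuts the invariants you intended to use: for a non-dicritical blow-up at a reduced singular point the pulled-back defining $1$-form vanishes to order one on $E$, so $N_{\widetilde{\mathcal F}}=\pi^{*}N_{\mathcal F}\otimes\mathcal O(-E)$, not $\pi^{*}N_{\mathcal F}$. With the correct formula (or directly from $N_{\mathcal F}\cdot C'=C'^2+Z(\mathcal F,C')$ applied upstairs on the covering) one gets $N_{\mathcal F}\cdot C=3$ and hence $K_{\mathcal F}\cdot C=0$, not $N_{\mathcal F}\cdot C=1$, $K_{\mathcal F}\cdot C=-2$; so the ``positive invariant curve'' numerology you planned to lean on in Step~3 is miscomputed. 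Your Step~1 (Camacho--Sad giving $\lambda^2-\lambda+1=0$, Poincar\'e domain, linearizability) and the rationality of $X$ are correct, but they are only the starting point of the argument.
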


The unique foliation given by Theorem \ref{T:Brunella} will be called {\it Brunella's very special foliation} (see subsection \ref{S:C^2=3} for the definition).

But, what occurs if $C^2$ is an arbitrary positive integer? More specifically, we want to study/classify foliations on compact complex surfaces satisfying  assumptions similar to the ones of Theorem \ref{T:Brunella} with the hypothesis $C^2=3$ replaced by $C^2=n$, where $n$ is an arbitrary positive integer.

\begin{definition}\label{D:Link}
Let $\mathcal F$ be a foliation on a compact complex surface $X$. A {\it link} for $\mathcal F$ is a rational nodal curve $C\subset X$ with only one node $p\in C$ such that:
\begin{enumerate}
\item $C$ is {\it positive}, that is, $C^2=n>0$;
\item $C$ is $\mathcal F$-invariant;
\item $p$ is a reduced nondegenerate singularity of $\mathcal F$, and it is the unique singularity of $\mathcal F$ on $C$.
\end{enumerate}
\end{definition}

The existence of $C\subset X$, $C^2=n>0$, implies that $X$ is a projective surface (see \cite{BPV}, Theorem 6.2, page 160).

%For simplicity, we will take foliations on singular surfaces (that can be desingularised).

Our main purpose in this paper is  to prove the following theorem:

\begin{thm}\label{T:Main}
Let $\mathcal F$ be a foliation on a compact complex surface $X$ and let $C\subset X$ be a link for $\mathcal F$. Then we have only three possibilities, each one unique up to birational transformations:
\begin{enumerate}
\item $C^2=1$ and $\mathcal F$ is birational to a foliation $\mathcal F_1$ on $Bl_3(\mathbb{P}^2)/\alpha$, where $\alpha \in Aut(Bl_3(\mathbb{P}^{2}))$ and $Bl_3(\mathbb{P}^2)$ is a blow-up of $\mathbb{P}^2$ in three non-collinear points;
\item $C^2=2$ and $\mathcal F$ is birational to a foliation $\mathcal F_2$ on $\mathbb P^1\times \mathbb P^1/\beta$, $\beta \in Aut(\mathbb P^1\times \mathbb P^1)$;
\item $C^2=3$ and $\mathcal F$ is birational to a foliation $\mathcal F_3$ on $\mathbb P^2/\gamma$ (Brunella's very special foliation), $\gamma \in Aut(\mathbb P^{2})$.
\end{enumerate}
\end{thm}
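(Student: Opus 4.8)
\emph{Sketch of the intended proof.} The argument runs in three stages: numerical constraints from the index theorems; a reduction to Kodaira dimension zero; and an appeal to the classification of foliations with numerically trivial canonical bundle, inside which the three cases of the statement appear.

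\emph{Stage 1: numerics.} Since $C$ is a rational curve with a single node, $p_a(C)=1$, so adjunction gives $K_X\cdot C=-n$; in particular $-K_X\cdot C>0$, and because $C$ is a rational curve of positive self-intersection one concludes that $X$ is a rational surface (non-negative Kodaira dimension, or ruledness over a base of positive genus, would force $K\cdot(\text{image of }C)\ge 0$ on a suitable model). Pulling the vector field generating $\mathcal F$ back to the normalization $\nu\colon\mathbb P^1\to C$ and counting its zeros -- along each of the two branches through $p$ the restricted field vanishes to order one, since $p$ is reduced and nondegenerate -- yields $K_{\mathcal F}\cdot C=0$, hence $N_{\mathcal F}\cdot C=n$. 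Finally the Camacho--Sad index formula along the singular invariant curve $C$, with the node contributing the correction $2(B_1\cdot B_2)_p=2$ from its two branches $B_1,B_2$, gives
\[
n=C^2=\mathrm{CS}(\mathcal F,B_1,p)+\mathrm{CS}(\mathcal F,B_2,p)+2=\lambda+\lambda^{-1}+2,
\]
where $\lambda$ is the eigenvalue ratio of $\mathcal F$ at $p$; thus $\lambda^2-(n-2)\lambda+1=0$, and reducedness of $p$, i.e.\ $\lambda\notin\mathbb Q_{>0}$, already excludes $n=4$ (which would force $\lambda=1$) and, together with the next two stages, will pin $\lambda$ down to a root of unity of order $3$, $4$ or $6$.

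\emph{Stage 2: $\kappa(\mathcal F)=0$.} Being irreducible with $C^2=n>0$, $C$ is nef and big. If $\kappa(\mathcal F)=-\infty$ then $\mathcal F$ is a rational fibration, whose irreducible invariant curves are components of fibres that are trees of rational curves, hence of arithmetic genus $0$ -- contradicting $p_a(C)=1$. So $K_{\mathcal F}$ is pseudo-effective; intersecting its Zariski decomposition $K_{\mathcal F}=P+N$ with the nef class $C$ gives $P\cdot C=N\cdot C=0$, whence $P^2=0$ by nefness and then $P\equiv 0$ by the Hodge index theorem (as $P\perp C$ and $C^2>0$). Hence $K_{\mathcal F}$ is numerically equivalent to the effective rigid divisor $N$, whose support is a negative-definite configuration, and $\kappa(\mathcal F)=0$.

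\emph{Stage 3: classification and models.} Running the foliated minimal model program (contracting the $\mathcal F$-exceptional curves) yields a relatively minimal $\mathcal F_0$ on a rational surface $X_0$ with $K_{\mathcal F_0}$ nef and $\kappa(\mathcal F_0)=0$, hence $K_{\mathcal F_0}\equiv 0$, while the image $C_0$ of $C$ retains positive self-intersection. By the classification of foliations with numerically trivial canonical bundle (Brunella, McQuillan), such an $\mathcal F_0$ on a rational surface is either turbulent on a rational elliptic fibration -- whose invariant curves are vertical, excluded by $C_0^2>0$ -- or one of the three ``exceptional'' foliations. The latter arise from the toric surfaces $Y_m$ whose fan admits a rotation $\sigma_m$ of order $m\in\{3,4,6\}$, namely $Y_3=\mathbb P^2$, $Y_4=\mathbb P^1\times\mathbb P^1$ and $Y_6=Bl_3(\mathbb P^2)$: a $\sigma_m$-eigen logarithmic $1$-form with poles along the toric boundary $\partial Y_m$ defines a $\sigma_m$-invariant foliation singular exactly at the $m$ vertices of $\partial Y_m$, and $\mathcal F_m$ is its image on $X_m:=Y_m/\langle\sigma_m\rangle$. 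Since $\sigma_m$ acts freely on $\partial Y_m$, cyclically permuting its $m$ components, the image $C_m$ of $\partial Y_m$ is a rational curve with a single node, lying in the smooth locus of $X_m$, with
\[
C_m^2=\frac{(\partial Y_m)^2}{m}=\frac{(-K_{Y_m})^2}{m}=\frac93,\ \frac84,\ \frac66=3,\,2,\,1\qquad(m=3,4,6),
\]
and $C_m$ is a link for $\mathcal F_m$ with node-ratio $\lambda$ of the order forced by Stage 1. Matching any given $(X,\mathcal F,C)$ with these models through the birational reduction of Stages 2--3 gives exactly the three listed possibilities, and in particular $n\le 3$; uniqueness up to birational transformation follows from the uniqueness of the relatively minimal model when $\kappa\ge 0$.

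The principal difficulty is Stage 3: one must run the minimal model program while keeping track of $C$ so that $C_0^2>0$ and the link structure survive and the Hodge-index reasoning remains valid despite the quotient singularities that may appear, and then identify the abstract relatively minimal foliation produced by the classification with the concrete quotient-of-toric model -- in particular exhibiting the automorphisms $\alpha,\beta,\gamma$ and recovering the node of $C$ from the rotational symmetry of the fan. It is this identification, fed back through the normal-bundle and Camacho--Sad data of Stage 1, that forces $n\in\{1,2,3\}$.
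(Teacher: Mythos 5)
Your route (foliated MMP plus the classification of numerically trivial foliations) is genuinely different from the paper's, but as written it has two real gaps. First, the pivot of Stage 3 --- ``running the foliated MMP yields a relatively minimal $\mathcal F_0$ with $K_{\mathcal F_0}$ nef'' --- is false in exactly the situation at hand: by Brunella's nef-model theorem (the reference \cite{Brunella2} of this paper), a relatively minimal foliation has nef canonical bundle \emph{unless} it is a rational fibration or the very special foliation, i.e.\ the $C^2=3$ case of the statement you are proving is itself the exception to the step you invoke. So that case cannot be reached by your Stage 3 without separately appealing to Theorem \ref{T:Brunella} (which is what the paper does), and you must say so. Second, the dichotomy you attribute to the classification of $K_{\mathcal F}\equiv 0$ foliations on rational surfaces --- ``turbulent, or one of three toric-quotient exceptional foliations'' --- is not what that classification says: for instance every degree-one foliation on $\mathbb P^2$ has $K_{\mathcal F}=\mathcal O_{\mathbb P^2}$, and there are Riccati examples and many other finite quotients with $\nu=0$ on rational surfaces. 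The content actually needed --- that the link survives the birational reduction as a nodal rational invariant curve of positive self-intersection with a single reduced nondegenerate singularity, and that among all $\nu=0$ foliations only the three quotient models carry such a curve, which in particular excludes every $n>4$ (where $\lambda$ is a positive irrational and your Stage 1 gives no contradiction) --- is precisely the ``principal difficulty'' you name and do not carry out. Likewise, uniqueness does not follow from uniqueness of the relatively minimal model alone: that gives each foliation one model, not that two foliations with links of equal self-intersection share the \emph{same} model; the identification step is again where that is proved.

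For comparison, the paper does this work concretely and rather differently: the Camacho--Sad computation shows that for $n=1,2,3$ the number $-\lambda$ is a primitive root of unity of order $6,4,3$, which makes $N_{\mathcal F}^{*}\otimes\mathcal O_X(C)$ torsion on a neighbourhood of $C$; Ueda's extension lemma together with strong pseudoconvexity of $X-C$ then produces a global cyclic covering (Proposition \ref{P:Covering}) in which $C$ lifts to a cycle of $k$ rational curves of self-intersection $C^2-2$, and the explicit Propositions \ref{P:3-cycle} and \ref{P:4-cycle} (with the conjugation lemmas) identify the lifted foliation with the linear models and pin down the deck automorphism up to conjugation, giving both existence of the matching and uniqueness. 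The case $n=4$ is excluded because $\lambda=1$ contradicts reducedness, and $n>4$ is excluded by a separate Riccati argument: after blowing up until the strict transform of $C$ has zero self-intersection one gets a Riccati foliation, and Proposition \ref{P:Riccati} forces a fibre to be supported on the negative-definite exceptional divisor, a contradiction. If you want to salvage your approach, you would need (i) to treat the very special foliation exception to nefness separately, (ii) a precise citation and use of the $\nu=0$ classification together with an argument tracking the link through the reduction and ruling out all other entries of the list (including all $n>4$), and (iii) an identification argument yielding uniqueness; items (ii)--(iii) are essentially the covering-and-cycle analysis the paper performs. (Minor points: your Stage 2 inference $\nu=0\Rightarrow\kappa=0$ needs justification, and the $\kappa=-\infty$ alternative should be phrased via non-pseudo-effectivity of $K_{\mathcal F}$, since Hilbert modular foliations also have $\kappa=-\infty$; neither of these is the main obstruction.)
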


\section{Some results in algebraic and complex geometry} For the reader's convenience, we summarize here some classical fundamentals results which will be used along this paper.

\subsection{Bimeromorphic geometry}

\begin{definition} [{\it Exceptional Curves}]
A compact, reduced, connected curve $C$ on a nonsingular surface $X$ is called {\it exceptional}, if there is a bimeromorphic map $\pi:X\rightarrow Y$ such that $C$ is exceptional for $\pi$, i.e., if there is an open neighbourhood $U$ of $C$ in $X$, a point $y\in Y$, and a neighbourhood $V$ of $y$ in $Y$, such that $\pi$ maps $U-C$ biholomorphically onto $V-\{y\}$, whereas $\pi(C)=y$. We shall express this situation also by saying that $C$ is {\it contracted} to $y$.
\end{definition}

\begin{thm}[Grauert's criterion, \cite{BPV}, page 91]\label{Grauert}
A reduced, compact connected curve $C$ with irreducible components $C_i$ on a smooth surface is exceptional if and only if the intersection matrix ($C_i\cdot C_j$) is negative definite.
\end{thm}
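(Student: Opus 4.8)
The plan is to prove the two implications separately, the necessity being elementary and the sufficiency being the analytically deep half of the statement. For the necessity direction (\emph{exceptional} $\Rightarrow$ \emph{negative definite}) I would assume that a bimeromorphic $\pi\colon U\to V$ contracts $C=\bigcup_i C_i$ to a point $y$, and extract from this a divisor supported on $C$ whose intersection numbers with the $C_j$ are controlled. Concretely, I would pick a nonconstant holomorphic function $f$ on $V$ with $f(y)=0$ and set $g=f\circ\pi$ on $U$. Since $g$ vanishes identically along every $C_i$, its divisor splits as $(g)=\sum_i r_i C_i+\Gamma$ with all $r_i\ge 1$ and $\Gamma$ effective having no component in common with $C$ (the horizontal part coming from the preimage of $f^{-1}(0)\setminus\{y\}$).

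The key observation is that, because $(g)$ is a principal divisor, the line bundle $\mathcal{O}_U((g))$ is trivial, so its restriction to each \emph{compact} curve $C_j$ has degree zero; hence $(g)\cdot C_j=0$ for every $j$, regardless of the fact that $g$ vanishes on $C_j$. Writing $Z=\sum_i r_i C_i$, this yields $Z\cdot C_j=-\,\Gamma\cdot C_j\le 0$ for all $j$, since $\Gamma$ is effective and meets each $C_j$ properly. Moreover $\Gamma$ must meet $C$ nontrivially, because the closure of the preimage of a curve through $y$ cannot avoid the fibre over $y$, so $Z\cdot C_{j_0}<0$ for at least one $j_0$. At this point I would invoke the standard linear-algebra lemma: a real symmetric matrix $M$ with nonnegative off-diagonal entries, whose index set is connected (for every proper nonempty $I$ there are $i\in I$, $j\notin I$ with $M_{ij}>0$), and which carries a strictly positive vector $r$ with $Mr\le 0$ componentwise and $Mr\ne 0$, is negative definite. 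Applying it to $M=(C_i\cdot C_j)$ (off-diagonal entries $C_i\cdot C_j\ge 0$ for $i\ne j$, connectedness inherited from connectedness of $C$) and to the coefficient vector $r=(r_i)$ gives that the intersection matrix is negative definite.

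For the sufficiency direction, which is the heart of Grauert's theorem, I would assume $(C_i\cdot C_j)$ negative definite and construct the contraction analytically. The strategy is to produce a strictly plurisubharmonic exhaustion function $\varphi$ on a neighborhood of $C$, turning that neighborhood into a strongly pseudoconvex (\emph{$1$-convex}) space having $C$ as its maximal compact analytic subset. The construction of $\varphi$ is where negative definiteness is essential: it allows one to build an effective $\mathbb{Q}$-divisor supported on $C$ whose associated line bundle is negative along $C$, and from a Hermitian metric on that bundle one obtains, via $-\log$ of the norm of a defining section together with a convexifying modification, a $\varphi$ for which the sublevel sets $\{\varphi<\epsilon\}$ are strongly pseudoconvex.

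Once the $1$-convex neighborhood is in hand, I would apply Grauert's fundamental blowdown (Remmert reduction) theorem: on a $1$-convex space the maximal compact analytic subvariety is contracted by a proper map onto a Stein, hence analytic, space, collapsing each connected compact component to a single point. Since $C$ is connected it is sent to one point $y$, and the resulting proper map $U\to V$ onto a neighborhood $V$ of $y$ biholomorphic off $C$ verifies directly the definition of exceptional. The main obstacle, by a wide margin, is this sufficiency half: both the explicit construction of the strictly plurisubharmonic function from the negative definite form and the invocation of Grauert's analytic contraction theorem are substantial, and in practice one cites Grauert and \cite{BPV} rather than reproving them.
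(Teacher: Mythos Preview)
The paper does not prove this theorem at all: it is quoted from \cite{BPV} as a classical background result, with no argument given. Your outline is a correct sketch of the standard proof one finds in \cite{BPV} (and in Grauert's original work): Mumford's principal-divisor argument plus the connectedness linear-algebra lemma for the easy direction, and the construction of a strictly plurisubharmonic exhaustion followed by the Remmert reduction of a $1$-convex neighborhood for the hard direction. There is nothing to compare against in this paper, and your proposal would be an appropriate expansion of the citation.
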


\begin{definition}[{\it Exceptional curves of the first kind}]
These are nonsingular rational curves with self-intersection $-1$. Frequently we call such curves $(-1)$-curves. A very useful characterisation of $(-1)$-curves is given by\end{definition}

\begin{thm}[\cite{BPV}, page 97]
Let $X$ be a nonsingular surface, $E \subset X$ a $(-1)$-curve and $\pi:X\rightarrow Y$ the map contracting $E$. Then $y = \pi (E)$ is nonsingular on $Y$.
\end{thm}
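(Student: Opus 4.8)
The plan is to show that the local ring $\mathcal O_{Y,y}$ of $Y$ at $y=\pi(E)$ is regular of dimension two, which is exactly the assertion that $y$ is nonsingular. Since $\pi$ is biholomorphic away from $E$ and contracts $E$ to $y$, we have $\mathcal O_{Y,y}=(\pi_*\mathcal O_X)_y$: its elements are germs at $y$ of holomorphic functions, i.e.\ germs along $E$ of holomorphic functions on a neighbourhood of $E$ in $X$. As $Y$ is a surface, $\mathcal O_{Y,y}$ is a two-dimensional analytic (hence Noetherian) local ring, and for such a ring the embedding dimension always satisfies $\dim_{\mathbb C}\mathfrak m_y/\mathfrak m_y^2\ge 2$. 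Thus it suffices to prove the reverse inequality, namely that $\mathfrak m_y$ is generated by two elements, for then embedding dimension equals Krull dimension and $\mathcal O_{Y,y}$ is regular. (By Grauert's criterion, Theorem \ref{Grauert}, the contraction is normal, so $\mathcal O_{Y,y}$ is moreover integrally closed, but the essential point is the count of generators.)

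First I would compute the conormal data along $E$. Because $E\cong\mathbb P^1$ and $E^2=-1$, the normal bundle is $N_{E/X}=\mathcal O_X(E)|_E\cong\mathcal O_{\mathbb P^1}(-1)$, and dually $\mathcal O_X(-E)|_E\cong\mathcal O_{\mathbb P^1}(1)$. More generally, restricting $\mathcal O_X(-mE)$ to $E$ yields $\mathcal O_{\mathbb P^1}(m)$, for which $h^0=m+1$ and $h^1=0$ for all $m\ge 0$. The functions vanishing at $y$ pull back to functions on a neighbourhood of $E$ vanishing on $E$; filtering $\mathcal O_{Y,y}$ by order of vanishing along $E$, the piece of order $\ge m$ is $(\pi_*\mathcal O_X(-mE))_y$, and one checks that this filtration coincides with the $\mathfrak m_y$-adic one.

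Next I would extract the graded pieces from the restriction sequences $0\to\mathcal O_X(-(m+1)E)\to\mathcal O_X(-mE)\to\mathcal O_{\mathbb P^1}(m)\to 0$, obtained by tensoring the ideal sequence of $E$ with $\mathcal O_X(-mE)$. Passing to the associated long exact cohomology sequences on the infinitesimal neighbourhoods of $E$ and using that $H^1(\mathbb P^1,\mathcal O_{\mathbb P^1}(m))=0$ for every $m\ge 0$, the connecting maps vanish, so every section over $E$ lifts and the successive quotient $\mathfrak m_y^m/\mathfrak m_y^{m+1}$ is identified with $H^0(\mathbb P^1,\mathcal O_{\mathbb P^1}(m))$, of dimension $m+1$. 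In particular $\dim_{\mathbb C}\mathfrak m_y/\mathfrak m_y^2=h^0(\mathcal O_{\mathbb P^1}(1))=2$, which is the desired bound; and the full Hilbert function $m\mapsto m+1$ coincides with that of the regular ring $\mathbb C\{u,v\}$, confirming regularity. Concretely, a basis $f_1,f_2$ of $H^0(\mathbb P^1,\mathcal O_{\mathbb P^1}(1))$ lifts to two holomorphic functions vanishing on $E$ that descend to $Y$ and form a regular system of parameters at $y$, exhibiting $\pi$ as the blow-up of the smooth point $y$.

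The main obstacle is the passage from cohomology on $E$ and its infinitesimal neighbourhoods to genuine analytic data on $Y$: one must justify both that the $\mathfrak m_y$-adic filtration agrees with the order-of-vanishing filtration and that the computed graded pieces really describe $\mathcal O_{Y,y}$ on an honest neighbourhood rather than only formally. This is precisely where Grauert's theory of the contraction intervenes, comparing $\pi_*\mathcal O_X$ with its formal completion along $E$; the vanishing of the relevant $H^1$ groups eliminates the obstructions, guarantees convergence of the lifted sections, and upgrades the formal isomorphism $\widehat{\mathcal O}_{Y,y}\cong\mathbb C[[u,v]]$ to an analytic one, so that $y$ is nonsingular.
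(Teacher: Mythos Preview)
The paper does not supply its own proof of this statement; it is simply quoted from \cite{BPV} as part of the background material in Section~2, so there is nothing to compare your argument against.

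That said, your outline is the standard Castelnuovo--Grauert argument and is essentially correct. Two points deserve a bit more care. First, the claim that the $\mathfrak m_y$-adic filtration coincides with the order-of-vanishing filtration along $E$ is not automatic: the inclusion $\mathfrak m_y^m \subset (\pi_*\mathcal O_X(-mE))_y$ is clear, but the reverse inclusion amounts to saying that the multiplication maps $\operatorname{Sym}^m(\mathfrak m_y/\mathfrak m_y^2)\to \mathfrak m_y^m/\mathfrak m_y^{m+1}$ are surjective, which you must deduce inductively from the same $H^1$-vanishing you invoke. Second, the formal-to-analytic step you flag is indeed the crux; rather than appealing abstractly to ``Grauert's theory'', the cleanest way (and the one closest to \cite{BPV}) is to use the two lifted sections $f_1,f_2$ to define an explicit holomorphic map from a neighbourhood of $E$ to $\mathbb C^2$, observe that it contracts $E$ to the origin and is an immersion elsewhere, and then check directly that the induced map from a neighbourhood of $y$ in $Y$ to a neighbourhood of $0\in\mathbb C^2$ is a biholomorphism. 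This bypasses any formal-neighbourhood comparison and makes the identification of $\pi$ with a single $\sigma$-process transparent.
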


\begin{thm} [Uniqueness of the $\sigma$-process, \cite{BPV}, page 98]
Let $X$ and $Y$ be smooth surfaces and $\pi: X\rightarrow Y$ a bimeromorphic map. If $E = \pi^{-1}(y)$ is an irreducible curve, then near $E$, the map $\pi$ is equivalent to the $\sigma$-process with centre $y$. 
\end{thm}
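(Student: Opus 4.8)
The plan is to reduce the statement to the single fact that the contracted curve $E$ is a $(-1)$-curve, and then to compare $\pi$ with the genuine blow-up of $Y$ at the smooth point $y$.

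First I would record the global geometry. Since $\pi$ is bimeromorphic and $E=\pi^{-1}(y)$, the restriction of $\pi$ to $X\setminus E$ is a biholomorphism onto $Y\setminus\{y\}$, so $E$ is an exceptional curve, contracted to the single point $y$. Grauert's criterion (Theorem~\ref{Grauert}) then gives $E^2<0$. The essential numerical step is to sharpen this to $E^2=-1$. Working in a neighbourhood of $E$, where $E$ is the only $\pi$-exceptional curve, I note that in local coordinates $(u,v)$ centred at $y$ the Jacobian determinant of $\pi$ vanishes along $E$, since $d\pi$ kills the tangent direction of $E$ at each of its points; writing $a=\ord_E(\det d\pi)\ge 1$ one obtains $K_X=\pi^{*}K_Y+aE$ near $E$. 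Using the projection formula $\pi^{*}K_Y\cdot E=K_Y\cdot\pi_{*}E=0$ together with adjunction, I would compute
\[
2p_a(E)-2=(K_X+E)\cdot E=(a+1)\,E^2 .
\]
Because $a\ge 1$ and $E^2\le -1$, the right-hand side is $\le -2$, which forces $p_a(E)=0$ and then $a=1$, $E^2=-1$. Hence $E\cong\P^1$ is a $(-1)$-curve.

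Next I would compare $\pi$ with the actual $\sigma$-process. Let $\sigma\colon\hat Y\to Y$ be the blow-up of the smooth point $y$, with exceptional $(-1)$-curve $\tilde E\cong\P^1$, and set $\phi=\sigma^{-1}\circ\pi$, a bimeromorphic map from $X$ to $\hat Y$. Off the exceptional loci, $\phi$ is a biholomorphism from $X\setminus E$ onto $\hat Y\setminus\tilde E$; and since a meromorphic map out of a smooth surface is holomorphic outside a set of codimension at least two, $\phi$ is a morphism on $X$ minus finitely many points of $E$, carrying $E$ into $\tilde E$. Granting that $\phi$ is in fact a biholomorphism in a neighbourhood of $E$ with $\phi(E)=\tilde E$, the relation $\pi=\sigma\circ\phi$ holds near $E$, exhibiting $\pi$ as the $\sigma$-process with centre $y$, which is the assertion.

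The main obstacle is precisely this last claim: that $\phi$ has no indeterminacy along $E$ and contracts no curve. I would settle it by eliminating the indeterminacy of $\phi$ through a common smooth model $W$, with blow-up towers $p\colon W\to X$ and $q\colon W\to\hat Y$ satisfying $\sigma\circ q=\pi\circ p$. The fibre $(\pi\circ p)^{-1}(y)=(\sigma\circ q)^{-1}(y)$ is then a connected configuration of rational curves which $p$ contracts onto the irreducible curve $E$ and $q$ contracts onto the irreducible curve $\tilde E$. Since both targets are irreducible $(-1)$-curves and $p,q$ are composites of blow-ups of smooth points, each strictly lowering a self-intersection, a count of the $(-1)$-curves in this configuration should force the fibre to consist of a single $\P^1$ and both $p$ and $q$ to be isomorphisms over a neighbourhood of $y$; then $\phi=q\circ p^{-1}$ is the desired biholomorphism. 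This identification—that two $(-1)$-curves contracted to the same smooth point by bimeromorphic morphisms differ by an isomorphism of the bases—is the genuinely delicate point, the remaining ingredients being adjunction and the elementary geometry of exceptional curves.
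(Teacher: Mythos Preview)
The paper does not supply its own proof of this statement; it is quoted verbatim from \cite{BPV} as a classical input, so there is nothing to compare against. I will therefore just assess your argument on its own terms.

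Your first half is fine: the computation $2p_a(E)-2=(a+1)E^2$ with $a\ge 1$ and $E^2\le -1$ cleanly pins down $E^2=-1$, $p_a(E)=0$, $a=1$, so $E$ is a $(-1)$-curve. This is the standard route.

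The second half, however, is not finished, and you say so yourself (``should force'', ``genuinely delicate point''). The common-resolution picture with $p\colon W\to X$, $q\colon W\to\hat Y$ does not by itself give a contradiction from a mere count of $(-1)$-curves: nothing you have written rules out that $p$ and $q$ are the same nontrivial sequence of blow-ups on each side. You need an extra invariant or a direct argument. Two clean ways to close the gap:

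\begin{itemize}
\item[\textbf{(i)}] \emph{Universal property of the blow-up.} Since $E$ is the only curve over $y$ and $X$ is smooth, $\pi^{-1}\mathfrak m_y\cdot\mathcal O_X=\mathcal O_X(-nE)$ for some $n\ge 1$, which is invertible. Hence $\pi$ factors as $\pi=\sigma\circ\phi$ with $\phi\colon X\to\hat Y$ a \emph{morphism}. Now $\phi$ is proper bimeromorphic between smooth surfaces; if it were not an isomorphism near $E$, it would contract a curve, and the only candidate is $E$ itself. But then $\pi=\sigma\circ\phi$ would have a reducible fibre over $y$ (the strict transform of $\tilde E$ together with the image of $E$ give at least two components after one more application of your first-half argument to $\phi$), contradicting the hypothesis that $\pi^{-1}(y)=E$ is irreducible. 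So $\phi$ contracts nothing, hence is finite and bimeromorphic, hence an isomorphism near $E$ by Zariski's Main Theorem.
\item[\textbf{(ii)}] \emph{Picard-number bookkeeping on your $W$.} With $p$ a composition of $k$ point blow-ups and $q$ of $l$ point blow-ups, the exceptional locus of $W\to Y$ over $y$ has exactly $k+1$ irreducible components (seen from the $X$ side) and exactly $l+1$ (seen from the $\hat Y$ side), so $k=l$. To force $k=0$ you still need an argument such as (i), or the observation that a minimal resolution of $\phi$ has no $(-1)$-curve contracted by \emph{both} $p$ and $q$; since here every exceptional curve of $p$ lies over $E$ and every exceptional curve of $q$ lies over $\tilde E$, minimality plus irreducibility of $E$ and $\tilde E$ then gives $k=l=0$.
\end{itemize}

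Either way, the missing ingredient is a genuine reason why $\phi$ is already a morphism with no contracted curve; once you insert (i), your write-up becomes a complete proof.
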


\begin{lemma} [Factorization lemma, \cite{BPV}, page 98]\label{Factorization}
Let $\pi: X\rightarrow Y$ be a bimeromorphic map with $X$, $Y$ nonsingular surfaces. Unless it is an isomorphism, there is a factorization $\pi=\pi'\circ \sigma$, where $\sigma:X\rightarrow X$ is a $\sigma$-process.
\end{lemma}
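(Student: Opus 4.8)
The plan is to treat $\pi$ as a bimeromorphic \emph{morphism} (holomorphic and birational) of smooth surfaces, and to read the target of $\sigma$ as a surface $X'$ (the ``$X$'' in the statement being a typo for $X'$): I will produce a single $\sigma$-process $\sigma\colon X\to X'$ through which $\pi$ factors as $\pi=\pi'\circ\sigma$ with $\pi'\colon X'\to Y$ again a bimeromorphic morphism; iterating then exhibits $\pi$ as a finite composition of $\sigma$-processes. Since $\pi$ is not an isomorphism, its exceptional locus $A=\bigcup_i E_i$ (the union of the positive-dimensional fibres) is nonempty, and by Grauert's criterion (Theorem \ref{Grauert}) the intersection matrix $(E_i\cdot E_j)$ of the components lying over any fixed point is negative definite; in particular $E_i^2<0$ for every $i$. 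I will first locate a $(-1)$-curve inside $A$, then contract it and descend $\pi$.

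To find the $(-1)$-curve, consider the relative canonical divisor $\Delta:=K_X-\pi^*K_Y$. Because $\pi$ is an isomorphism off $A$, $\Delta$ is supported on $A$, say $\Delta=\sum_i a_i E_i$, and a local computation with the Jacobian of $\pi$ (which vanishes to order $\ge 1$ along each contracted curve, since $\pi$ drops rank there) shows that every $a_i\ge 1$; thus $\Delta$ is effective and nonzero. For any contracted component $E_j$ the projection formula gives $\pi^*K_Y\cdot E_j=K_Y\cdot\pi_*E_j=0$, hence $K_X\cdot E_j=\Delta\cdot E_j$. Now $\Delta^2=\sum_j a_j(\Delta\cdot E_j)<0$ by negative definiteness, while each $a_j>0$; therefore $\Delta\cdot E_j<0$ for at least one index, i.e.\ $K_X\cdot E<0$ for some contracted curve $E:=E_j$. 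Feeding $K_X\cdot E<0$ and $E^2<0$ into the adjunction formula $2p_a(E)-2=K_X\cdot E+E^2$ forces $K_X\cdot E=E^2=-1$ and $p_a(E)=0$, so $E$ is a smooth rational curve with $E^2=-1$: a $(-1)$-curve. This adjunction pinch is the conceptual heart of the argument, and establishing the effectivity of $\Delta$ is the step most in need of care.

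With $E$ in hand, Grauert's criterion makes $E$ exceptional, so it can be contracted by a map $\sigma\colon X\to X'$; the theorem that the contraction of a $(-1)$-curve has a nonsingular image point guarantees that $X'$ is smooth, and the uniqueness of the $\sigma$-process identifies $\sigma$ near $E$ with the blow-up of the point $x':=\sigma(E)$, so $\sigma$ is genuinely a $\sigma$-process. It remains to descend $\pi$. Set $\pi':=\pi\circ\sigma^{-1}$, a morphism on $X'\setminus\{x'\}$ where $\sigma$ is biholomorphic. Since $E$ is contracted by $\pi$ to a single point $y:=\pi(E)$, after shrinking a ball-shaped chart $U$ around $x'$ the image $\pi'(U\setminus\{x'\})=\pi(\sigma^{-1}(U)\setminus E)$ lies in a coordinate polydisc $W$ around $y$; the two coordinate functions of $\pi'$ are then bounded holomorphic functions on the punctured ball $U\setminus\{x'\}$, and since $x'$ has codimension two in the smooth surface $X'$, Riemann's (Hartogs') extension theorem extends them holomorphically across $x'$. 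The resulting morphism $\pi'\colon X'\to Y$ satisfies $\pi=\pi'\circ\sigma$ and is again bimeromorphic, which completes the factorization; because each $\sigma$-process decreases $b_2$ by one while it stays bounded below by $b_2(Y)$, iterating terminates and writes $\pi$ as a finite composition of $\sigma$-processes.
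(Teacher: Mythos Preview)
The paper does not supply its own proof of this lemma: it is merely quoted from \cite{BPV} (page~98) as background material. There is therefore nothing in the paper to compare against.

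Your argument is correct and is essentially the classical proof one finds in \cite{BPV} (or in Beauville's \emph{Complex Algebraic Surfaces}). You correctly read the statement with $\sigma\colon X\to X'$ (the ``$X\to X$'' in the paper is indeed a typo), and your strategy---use the relative canonical divisor $\Delta=K_X-\pi^*K_Y$, whose coefficients are positive on the exceptional locus, combine $\Delta^2<0$ (Grauert negative definiteness) with the projection formula to force $K_X\cdot E<0$ for some contracted component, then pinch with adjunction to get a $(-1)$-curve, contract it by Castelnuovo, and descend $\pi$ through the contraction via Hartogs across the codimension-two point---is exactly the standard one. The only step worth a sentence more of justification is the positivity of the $a_i$: in the analytic setting this is cleanest via the local computation you allude to (the Jacobian of $\pi$ vanishes along each contracted curve), or alternatively by an induction on the number of blow-ups once the full factorization is established.
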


\begin{cor} [Decomposition of bimeromorphic maps, \cite{BPV}, page 98]\label{Decomposition}
Let $X$, $Y$ be non-singular and $\pi: X\rightarrow Y$ a bimeromorphic map. Then $\pi$ is equivalent to a succession of $\sigma$-transforms, which locally (with respect to $Y$) are finite in number.
\end{cor}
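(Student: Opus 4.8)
The plan is to derive the statement from the Factorization Lemma (Lemma \ref{Factorization}) by iterating it and then controlling the number of steps locally over $Y$. A single application of Lemma \ref{Factorization} already strips off one $\sigma$-process, so the genuine content of the corollary is twofold: (i) that the resulting iteration terminates after finitely many steps over any relatively compact piece of $Y$, and (ii) that the intermediate factorizations fit together coherently, so that $\pi$ really is identified with the composite of the $\sigma$-transforms produced. I expect (i), the termination and local-finiteness argument, to be the main obstacle; once it is in place, (ii) is bookkeeping justified by the uniqueness of the $\sigma$-process stated above.

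First I would set up the iteration. If $\pi$ is an isomorphism there is nothing to prove. Otherwise Lemma \ref{Factorization} produces a factorization $\pi = \pi_1 \circ \sigma_1$, where $\sigma_1 \colon X \to X_1$ is a $\sigma$-process, i.e.\ the contraction of a single $(-1)$-curve $E_1 \subset X$, and $\pi_1 \colon X_1 \to Y$ is again a bimeromorphic morphism of nonsingular surfaces. Applying the lemma to $\pi_1$, and so on, yields $\pi = \pi_k \circ \sigma_k \circ \cdots \circ \sigma_1$, a composite of $\sigma$-processes $\sigma_i \colon X_{i-1} \to X_i$ (with $X_0 = X$), continuing as long as the current map fails to be an isomorphism. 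By the uniqueness theorem for $\sigma$-processes stated above, near its exceptional curve each $\sigma_i$ is biholomorphically equivalent to a standard monoidal transformation and the surfaces $X_i$ are determined up to isomorphism; this is precisely the sense in which $\pi$ is ``equivalent to a succession of $\sigma$-transforms,'' once one knows the sequence stops locally.

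Next I would prove local finiteness, the crux of the argument. The locus $Z \subset Y$ of points over which $\pi$ fails to be a local biholomorphism is, for a bimeromorphic morphism of smooth surfaces, a discrete set of points, and over each $y \in Z$ the fiber $\pi^{-1}(y)$ is a compact connected curve with finitely many irreducible components. Each $\sigma_i$ removes exactly one $(-1)$-curve from the exceptional divisor lying over a single point of $Z$, so it strictly decreases, by one, the number of irreducible components of the total exceptional divisor sitting over any fixed relatively compact open set $V \Subset Y$. Since that number is finite, being bounded by $\sum_{y \in Z \cap V} \#\{\text{components of } \pi^{-1}(y)\}$, only finitely many of the $\sigma_i$ can have their centre over $V$, and after removing those the restriction of $\pi$ over $V$ is an isomorphism. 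This strictly decreasing integer-valued invariant is exactly what forces termination, and it yields the assertion that the succession of $\sigma$-transforms is finite locally with respect to $Y$.

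Finally I would assemble the pieces. Covering $Y$ by countably many relatively compact opens $V_j$, the previous paragraph bounds the number of $\sigma$-processes centred over each $V_j$, so the full sequence $\{\sigma_i\}$ is locally finite over $Y$; globally it may be infinite when $Y$ is noncompact, which is precisely why the statement must be phrased locally. The coherence of the intermediate surfaces and the identification of $\pi$ with $\sigma_k \circ \cdots \circ \sigma_1$ then follow from the uniqueness theorem, completing the decomposition. As indicated, the single delicate point is the discreteness of $Z$ together with the finiteness of the fibers $\pi^{-1}(y)$, which is what makes the terminating invariant available.
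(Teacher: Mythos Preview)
The paper does not give its own proof of this corollary; it is quoted verbatim from \cite[p.~98]{BPV} as background and left unproved. Your argument is correct and is essentially the standard one found in \cite{BPV}: iterate Lemma~\ref{Factorization}, and terminate locally by observing that each $\sigma$-process strictly decreases the (finite) number of irreducible components of the exceptional fiber over any point of the discrete degeneracy locus $Z\subset Y$. The one point you flag as delicate---discreteness of $Z$ and finiteness of the fibers $\pi^{-1}(y)$---is indeed the only nontrivial input beyond Lemma~\ref{Factorization}, and it is a standard consequence of the structure of proper modifications between smooth surfaces (Zariski's Main Theorem together with Grauert's criterion, Theorem~\ref{Grauert}, forcing each positive-dimensional fiber to have negative-definite intersection matrix and hence finitely many components).
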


\begin{thm} [\cite{BPV}, page 192]\label{T:Fibrations}
Let $X$ be a compact surface and $C$ a smooth rational curve on $X$. If $C^2 = 0$, then there exists a modification $\pi:X \rightarrow Y$, where $Y$ is ruled, such that $C$ meets no exceptional curve of $\pi$, and $\pi(C)$ is a fibre of $\pi$
\end{thm}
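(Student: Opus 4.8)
The plan is to produce the ruling directly from the deformations of $C$. Since $C \cong \mathbb{P}^1$ and $C^2 = 0$, adjunction gives $C\cdot K_X = -2$, and the normal bundle $N_{C/X}$ is the line bundle of degree $C^2 = 0$ on $\mathbb{P}^1$, hence $N_{C/X} \cong \mathcal{O}_{\mathbb{P}^1}$. Therefore $h^0(C, N_{C/X}) = 1$ and $h^1(C, N_{C/X}) = 0$, so the Douady space (Hilbert scheme) of curves in $X$ is smooth of dimension $1$ at the point $[C]$. First I would extract from this a smooth connected $1$-dimensional family $\{C_t\}_{t \in T}$ of deformations of $C$, with $C = C_{t_0}$; since smoothness and genus are preserved under small deformation, the general $C_t$ is again a smooth rational curve.

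Next I would exploit that the members of the family are pairwise disjoint. Any two distinct members $C_t, C_{t'}$ are numerically equivalent to $C$, so $C_t \cdot C_{t'} = C^2 = 0$; being distinct irreducible curves, they must be disjoint. In particular the family has no base points, so it sweeps out the surface without any two members crossing. Passing to the closure and compactifying the parameter curve to a smooth projective curve $B$, the family organizes $X$ (away from the degenerations occurring over finitely many points of $B$) as a fibration $f : X \to B$ whose general fibre is the smooth rational curve $C_t$, and in which $C$ itself is a reduced irreducible fibre. Note $C$ cannot be a proper component of a reducible fibre $F$: from $C \cdot F = 0$ and $C^2 = 0$ one would get $C\cdot(F - C) = 0$, which is impossible since a connected reducible fibre meeting $C$ forces $C\cdot(F-C) > 0$.

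To reach $Y$ I would then take the relatively minimal model of this $\mathbb{P}^1$-fibration: repeatedly contract the $(-1)$-curves contained in the (necessarily reducible) degenerate fibres. Using the Factorization Lemma \ref{Factorization} and the characterization of $(-1)$-curves recalled in Section 2, these contractions assemble into a single modification $\pi : X \to Y$ onto a smooth surface carrying a relatively minimal $\mathbb{P}^1$-fibration $Y \to B$, i.e.\ a geometrically ruled surface; in particular $Y$ is ruled. Because every curve contracted by $\pi$ lies in a degenerate fibre, while $C$ is a disjoint full fibre, $C$ meets no exceptional curve of $\pi$, and $\pi(C)$ is a fibre of $Y \to B$, as required.

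The main obstacle I anticipate is the globalization step: upgrading the local, a priori only analytic, $1$-parameter deformation family into a genuine proper fibration $X \to B$ over a complete smooth base that accounts for every point of $X$. Concretely one must control how the disjoint smooth members degenerate over the boundary of the parameter space, show that these degenerations are reducible fibres (so that the surface is covered and the relatively minimal contraction is available), and verify that $C$ stays clear of all contracted components. Once the fibration is in hand, the contraction to a geometrically ruled surface and the disjointness of $C$ from the exceptional locus are comparatively routine consequences of the surface-theoretic results recalled above.
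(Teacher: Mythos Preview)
The paper does not supply a proof of this statement: Theorem~\ref{T:Fibrations} is quoted verbatim from \cite[p.~192]{BPV} as one of the ``classical fundamental results'' collected in Section~2, and is used later only as a black box. So there is no argument in the paper to compare your proposal against.

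That said, your outline is essentially the standard proof one finds in the cited source: compute $N_{C/X}\cong\mathcal{O}_{\mathbb{P}^1}$, deduce that the Douady/Hilbert space is smooth and one-dimensional at $[C]$, observe that distinct members of the family are disjoint since $C^2=0$, and pass to a relatively minimal model of the resulting $\mathbb{P}^1$-fibration. Your identification of the globalization step as the crux is accurate; in \cite{BPV} it is handled via properness of the Douady space (so the parameter curve is already compact) together with Kodaira's stability theorem and a direct analysis showing every fibre of the universal family maps to a curve in $X$, with the induced map $X\dashrightarrow B$ extending to a morphism because disjointness rules out base points. One small caveat: since the ambient $X$ is only assumed to be a compact complex surface, the argument must be run in the analytic category (Douady space rather than Hilbert scheme), though in the applications within this paper $X$ is in fact projective.
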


\subsection{Complex geometry} 
\begin{lemma}[\cite{Ueda}, Lemma 5]\label{Extension}
Let $X$ be a compact complex manifold of dimension $n>1$, $K$ a compact subset of $X$ and $E$ a holomorphic vector bundle over $X$. If $X$ is strongly pseudoconvex, then every section $s$ of $E$ over $X-K$ can be extended to a meromorphic section $\tilde{s}$ over all of $X$.
\end{lemma}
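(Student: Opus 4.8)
The plan is to treat this as a Hartogs-type meromorphic continuation, carried out through the $\bar\partial$-equation together with the finiteness theorems available on a strongly pseudoconvex manifold. First I would reduce the vector-bundle statement to the scalar one. Cover a neighbourhood of $K$ by finitely many charts $U_i$ on which $E$ is holomorphically trivial; in each trivialization the section $s$ becomes a tuple of holomorphic functions on $U_i - K$. Since the transition functions of $E$ are holomorphic across all of $X$, any compatible collection of local meromorphic extensions automatically patches into a global meromorphic section of $E$. Hence it suffices to extend a holomorphic function defined off a compact set to a meromorphic function, and the hypothesis $n>1$ is precisely what makes this possible, the phenomenon being false in dimension one.

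For the scalar problem I would use the classical cutoff-and-correct argument. Choose $\chi \in C^\infty(X)$ equal to $1$ on a neighbourhood of $K$ and supported in a slightly larger compact set, and put $F_0 := (1-\chi)s$, extended by $0$ across $K$. Then $F_0$ is smooth on all of $X$ and $\bar\partial F_0 = -s\,\bar\partial\chi =: g$, a $\bar\partial$-closed $(0,1)$-form supported in the compact shell where $\bar\partial\chi \neq 0$, which lies away from $K$. If one can solve $\bar\partial u = g$ with $u$ vanishing near the pseudoconvex end of $X$, then $F := F_0 - u$ is holomorphic on $X$ and coincides with $s$ near that end, so $F$ extends $s$. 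Strong pseudoconvexity is exactly what feeds this step: by the Andreotti--Grauert theory the cohomology $H^1(X,\mathcal O)$, and more generally $H^1$ of the relevant coherent sheaves, is finite dimensional, so $\bar\partial u = g$ is solvable up to a finite-dimensional obstruction, while the compact support of $g$ controls the support of $u$.

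The meromorphic, rather than holomorphic, nature of the conclusion comes from this finite-dimensional obstruction, which is also where I expect the main difficulty to lie. On a general strongly pseudoconvex $X$ the class of $g$, equivalently the image of $s$ in the local cohomology $H^1_K(X,\mathcal O)$, need not vanish, so $s$ may fail to extend holomorphically. To remove it I would pass to the Remmert reduction $\pi \colon X \to \hat X$, with $\hat X$ Stein and $\pi$ contracting the maximal compact analytic subset to finitely many points; since Cartan's Theorem B annihilates positive-degree cohomology on $\hat X$, the obstruction is concentrated at those points and can be killed after multiplying $s$ by a suitable global meromorphic function (for instance the pullback of a holomorphic function on $\hat X$ vanishing to high order there). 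The product then extends holomorphically by the $\bar\partial$ argument, and dividing back produces the desired meromorphic extension $\tilde s$ over all of $X$. The delicate point throughout is the bookkeeping that guarantees the obstruction can be cleared by such a multiplier while keeping the support of the $\bar\partial$-solution away from the pseudoconvex end, so that the extension genuinely restricts to $s$ on $X-K$.
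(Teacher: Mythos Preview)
The paper does not prove this lemma itself; it is quoted as Lemma~5 from Ueda's paper and used as a black box in the construction of the covering in Section~\ref{S:Proof}. So there is no in-paper proof to compare your attempt against.

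On your sketch itself: the overall architecture---cutoff, $\bar\partial$-correction, Andreotti--Grauert finiteness, Remmert reduction---is indeed the circle of ideas behind meromorphic Hartogs theorems on $1$-convex spaces, so the plan is sound in spirit. Two points need tightening. First, the reduction to the scalar case does not work as you describe it: $K\cap U_i$ need not be compact in the chart $U_i$, so you cannot invoke the local Hartogs phenomenon chart by chart and then patch; the problem is genuinely global, and one should run the $\bar\partial$ argument directly with $E$-valued forms (which costs nothing extra). Second, and this is where the real content lies, the claim that multiplying $s$ by a holomorphic function vanishing to high order along the exceptional set of the Remmert reduction kills the obstruction class in $H^1$ is not automatic. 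One has to show that the relevant cohomology is annihilated by a sufficiently high power of the ideal sheaf of the exceptional locus; this is true, but it uses coherence and a formal-functions/Noetherian argument rather than a soft observation. You correctly flag this as the delicate step, but the sketch stops short of actually resolving it. Note also that the hypothesis ``$X$ compact and strongly pseudoconvex'' is somewhat degenerate as literally written; in the application the relevant strong pseudoconvexity is that of $X\setminus C$, and your use of the Remmert reduction tacitly reads the lemma in that non-compact $1$-convex sense, which is the correct one.
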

\begin{lemma}[\cite{Neeman}, page $32$]\label{Pseudoconvexity}
Let $X$ be a compact complex surface and $C\subset X$ a compact irreducible curve. If $C^2>0$ then $X-C$ is strongly pseudoconvex. 
\end{lemma}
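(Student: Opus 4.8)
The plan is to exhibit $X-C$ as a strongly pseudoconvex ($1$-convex) surface by producing on it a smooth proper exhaustion function that is strictly plurisubharmonic outside a compact set. The natural candidate is built from the divisor $C$ itself. Put $L=\mathcal{O}_X(C)$ and let $s\in H^0(X,L)$ be the section with $\mathrm{div}(s)=C$. For any smooth Hermitian metric $h$ on $L$ the function $\varphi:=-\log\|s\|_h^2$ is smooth on $X-C$, tends to $+\infty$ along $C$, is bounded below (because $\|s\|_h^2$ is a continuous function on the compact surface $X$), and has every sublevel set $\{\varphi\le c\}=\{\|s\|_h^2\ge e^{-c}\}$ closed in $X$ and disjoint from $C$, hence compact inside $X-C$. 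So $\varphi$ is automatically a proper exhaustion of $X-C$; moreover, wherever $s$ does not vanish one has the pointwise identity $i\partial\bar\partial\varphi=\Theta_h(L)$, the curvature form of $h$. Consequently it suffices to find a globally defined smooth metric $h$ on $L$ whose curvature is positive definite on some neighbourhood $U$ of $C$: then $\varphi$ is strictly plurisubharmonic on $U-C$, i.e.\ outside the compact set $X\setminus U\subset X-C$, which is precisely the definition of strong pseudoconvexity.

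Producing such an $h$ is where the hypothesis $C^2>0$ is used, and it is the heart of the matter: one must propagate positivity of $L$ from the curve $C$ to a two-dimensional neighbourhood of it. Here $\deg(L|_C)=C\cdot C=C^2>0$, so $L|_C$ is positive on the curve. When $C$ is smooth the propagation can be done by hand: choose a metric $h_C$ on $L|_C$ with positive curvature (available because a positive-degree line bundle on a compact Riemann surface carries one), extend it to a smooth metric $h_0$ on $L$ over a neighbourhood of $C$, and replace $h_0$ by $h:=h_0\,e^{-A\|s\|_{h_0}^2}$ with $A>0$ a large constant. Then $\Theta_h(L)=\Theta_{h_0}(L)+A\,i\partial\bar\partial\|s\|_{h_0}^2$; in local coordinates $(z,w)$ with $C=\{w=0\}$ the added term is, along $C$, a positive multiple of $i\,dw\wedge d\bar w$ --- positive in the normal direction --- while $\Theta_{h_0}(L)$ is already positive along the tangent direction of $C$. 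So for $A$ large (one value of $A$ works, by compactness of $C$) the form $\Theta_h(L)$ is positive definite at every point of $C$, hence by continuity on a neighbourhood $U$ of $C$. Extending $h$ to a global smooth metric on $L$ without destroying positivity of the curvature on a possibly smaller neighbourhood (again called $U$) finishes the argument via the first paragraph.

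The genuinely delicate points are two. First, the sign: it is $C^2>0$, not merely $C^2\ne 0$, that forces the normal direction to contribute positively rather than negatively to the Levi form --- the opposite, ``exceptional'', situation ($C^2<0$) is governed by Grauert's criterion (Theorem~\ref{Grauert}) and yields pseudoconcavity near $C$ instead. Second, $C$ may be singular, and in the applications in this paper it is nodal; near a node the correction term $i\partial\bar\partial\|s\|_{h_0}^2$ degenerates, so one must instead argue on the normalization $\nu\colon\widetilde C\to C$ --- where $\deg\nu^*(L|_C)=C^2>0$ still holds, so a positive metric exists there --- or treat the two local branches through the node separately (cf.\ \cite{Neeman}).
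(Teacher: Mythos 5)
The paper offers no proof of this lemma at all (it is quoted from Neeman's memoir), so your attempt is best judged on its own. For a \emph{smooth} curve $C$ your argument is correct and standard: $\varphi=-\log\|s\|_h^2$ is a bounded-below proper exhaustion of $X-C$ with $i\partial\bar\partial\varphi=\Theta_h(L)$ off $C$, and the twist $h_0\mapsto h_0e^{-A\|s\|_{h_0}^2}$ does propagate positivity of $L|_C$ to a two-dimensional neighbourhood, since at a smooth point of $C$ the added term equals $e^{-\phi}\,i\,df\wedge d\bar f$, strictly positive in the normal direction, and compactness of $C$ gives a uniform $A$.

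The genuine gap is the singular case, which is exactly the case this paper needs (a link is a rational \emph{nodal} curve; the lemma even allows arbitrary singularities). At a node, with local equation $f=zw$, the complex Hessian of $\|s\|_{h_0}^2=|zw|^2e^{-\phi}$ vanishes identically at the origin, and moreover $|df|^2=|z|^2+|w|^2\to 0$ as one approaches the node along $C$; so no choice of $A$, however large, makes the twisted curvature positive at or uniformly near the node. All positivity there must come from $\Theta_{h_0}$ itself, i.e.\ you must produce a smooth extension of the metric whose local weight is strictly plurisubharmonic at the node --- and your suggested fixes do not deliver this: a positively curved metric on $\nu^*(L|_C)$ is data on a one-dimensional object, and ``treating the two branches separately'' says nothing at their common point. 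The missing step can be carried out, but it is an actual argument: choose the metric on the normalization so that its values at the two preimages of the node agree; note that positivity along each branch forces the diagonal Hessian entries $\phi_{z\bar z}(0),\phi_{w\bar w}(0)$ of any extension to be positive; then correct the off-diagonal entry by adding a term of the form $\mathrm{Re}(c\,z\bar w)$, which vanishes on both branches, so that the extended weight is strictly plurisubharmonic at the node; only after this does your compactness argument on all of $C$ (and hence on a neighbourhood) go through. Without this --- or some replacement, e.g.\ a projective-geometric construction of a metric on $\mathcal O_X(mC)$ positive near $C$, or simply invoking Neeman as the paper does --- the proof is incomplete precisely where the paper applies it.
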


\section{Existence}\label{S:Existence}
 For us a {\it cycle of smooth rational curves} (or simple a {\it cycle}) always means the union of a finite number of smooth rational curves in general position $C_i$, $i=1,..., m$, $m>1$, such that:
 if $m=2$, then $\# C_1 \cap C_2=2$;
 if $m>2$, then $\# C_i \cap C_{(i+1)}=\# C_1 \cap C_m=1$, $i=1,..., m-1$, otherwise $\# C_i \cap C_j=0$.
\subsection{Existence for $C^2=3$ (Brunella's very special foliation)} \label{S:C^2=3} Let $\mathcal L$ be the linear foliation on $\mathbb P^2$ given in affine coordinates by the linear $1$-form
\[
\omega=\lambda ydx-xdy=(\frac{1\pm\sqrt{-3}}{2}) ydx-xdy.
\]
This foliation has an invariant cycle of three lines $C_1\cup C_2 \cup C_3$. Moreover, the foliation $\mathcal L$ is $\gamma$-invariant, where $\gamma: (s:t:u)\longmapsto (u:s:t)$ is in $Aut(\mathbb P^2)$.

The quotient foliation $\mathcal F_3=\mathcal L /\gamma$ obtained by taking the quotient of $(\mathbb P^2, \mathcal L)$ by the group generated by $\gamma$ is, by definition, {\it Brunella's very special foliation}.

Note that the choose of $\lambda$ don't affect the birational class of $\mathcal F_3$, since the involution $(x,y)\mapsto (y,x)$ conjugates the two possible constructions.

\subsection{Existence for $C^2=2$} \label{S:C^2=2}
We take the foliation $\mathcal M$ on $\mathbb P^1\times \mathbb P^1$ given in affine coordinates $(x,y)$ by the linear 1-form
\[
\omega=\lambda ydx-xdy=\pm \sqrt{-1} ydx-xdy.
\]
where $\lambda =\pm \sqrt{-1}$. Then it leaves invariant the cycle of four lines 
\[
(\mathbb P^1\times \left\{0\right\})\cup (\mathbb P^1 \times \left\{\infty\right\})\cup (\left\{0\right\} \times \mathbb P^1)\cup (\left\{\infty\right\} \times \mathbb P^1),
\]
 in which the only singularities are the crossing points, each one reduced nondegenerate.
The automorphism of order 4
\[
\beta:(u:v,z:w)\mapsto (z:w,v:u).
\]
is such that, in affine coordinates $(x,y)$, $\beta (x,y)=(y,\frac{1}{x})$ and
\[
\beta^*\omega=\beta^*(\lambda ydx-xdy)=\lambda \frac{1}{x}dy-y(-\frac{1}{x^2})dx,
\]
hence, since $\lambda =\pm \sqrt{-1}$,
\[
\omega \wedge \beta^*\omega=(\lambda ydx-xdy) \wedge (\lambda \frac{1}{x}dy+\frac{y}{x^2}dx)=(\lambda ^2+1)\frac{y}{x}dx\wedge dy=0.
\]

Note that $\beta$ permutes cyclically the cycle of four lines
 \[
(\mathbb P^1\times \left\{0\right\})\cup (\mathbb P^1 \times \left\{\infty\right\})\cup (\left\{0\right\} \times \mathbb P^1)\cup (\left\{\infty\right\} \times \mathbb P^1).
\]
Then the quotient foliation $\mathcal F_2$ obtained by taking the quotient of $(\mathbb P^1\times \mathbb P^1, \mathcal M)$ by the group generated by $\beta$ is the desired foliation, that is, $\mathcal F_2$ has a link of self-intersection $2$.

Again the choose of $\lambda$ don't affect the birational class of $\mathcal F_2$, since the involution $(u:v,z:w)\mapsto (z:w,u:v)$ conjugates the two possible constructions.

\subsection{Existence for $C^2=1$} \label{S:C^2=1}
Let $\mathcal L$ and $\gamma$ as in subsection \ref{S:C^2=3}. Recall that $\mathcal L$ has a cycle of three invariant lines $C_1\cup C_2 \cup C_3$, where $C_i=\{[z_1:z_2:z_3]\in \mathbb P^2 \vert z_i=0 \}$, $i=1,2,3$. Consider the standard Cremona transformation $f:\mathbb P^2\dashrightarrow \mathbb P^2$, $f([z_1:z_2:z_3])=[z_2z_3:z_1z_3:z_1z_2]$. Note that $\mathcal L$ is $f$-invariant.  

If we blow-up the crossing points of the cycle of three $\mathcal L$-invariant projective lines $C_1\cup C_2 \cup C_3$, we obtain a birational morphism $\pi_3:Bl_3(\mathbb P^2)\rightarrow \mathbb P^2$  and a foliation $\mathcal N=\pi_3^*\mathcal L$ with an invariant cycle of six smooth rational $(-1)$-curves, say $\tilde C_1\cup \tilde C_2\cup \tilde C_3\cup C_4\cup C_5\cup C_6$, in which the singularities of $\mathcal N$ are only the crossing points (and they are reduced nondegenerate). Note that $\alpha=\pi_3^{-1}\circ f\circ \pi_3:Bl_3(\mathbb P^2)\rightarrow Bl_3(\mathbb P^2)$ becomes an automorphism of order six that preserves the foliation and permutes cyclically the cycle of six invariant rational curves.

%Let $\nu\in Aut(\mathbb P^2)$, $\nu(x:y:z)=(xy:yz:xz)$.

 %Take $\tilde C_1$, $\tilde C_2$ and $\tilde C_3$ three disjoints curves of the cycle. Contracting them we obtain a foliation $(\mathcal L_{1},X_1)$ and a birational transformation $\sigma_{1}:\mathcal N\longrightarrow\mathcal L_{1}$; if we contract $C_4$, $C_5$ and $C_6$ we obtain the foliation $(\mathcal L_{2},X_2)=(\mathcal L,\mathbb P^2)$ and a birational transformation $\sigma_{2}:\mathcal N\longrightarrow\mathcal L_{2}$. Note that $\gamma_i=\sigma_1 \circ \sigma_{2}^{-1} \gamma \circ \sigma_2 \circ \sigma_{1}^{-1}$ is an a $\mathcal{L}_{1}$-automorphism that permutes the invariant cycle of three rational curve (here we can use the Decomposition of bimeromorphic maps \ref{Decomposition} to show that $\gamma_1$ is biregular in a neighbourhood of the cycle). By Proposition \ref{P:3-cycle}, $\mathcal L_{1}$ is birational to $\mathcal{L}$. Therefore there exist a birational equivalence  $\rho:\mathcal L_{1}\longrightarrow \mathcal L_{2}$. Then we construct an automorphism $\alpha$ of order 6 permuting the cycle by the composite of theses blow-downs, blow-ups and the bimeromorphisms with $(\mathcal L,\mathbb P^2)$. That is, $\alpha=\sigma_{2}^{-1}\circ \rho \circ \sigma_{2}$.

 The quotient foliation $\mathcal F_1=\mathcal N /\alpha$ has a link of self-intersection 1, hence $\mathcal F_1$ is the desired foliation.

\section{Riccati Foliations}

We develop here the first tools to proof our main result.
 
Let $\mathcal F$ be a foliation on $X$ which is Riccati with respect to a fibration $\pi:X\rightarrow B$, where $B$ is a nonsingular curve. If $R$ is a regular fibre of $\pi$ which is $\mathcal F$-invariant, then (\cite [Chapter 4]{Brunella}): there are at most two singularities on $R$ and there exists coordinates $(x,y) \in D \times \mathbb{P}^1$ around $R$, where $D$ is a disc, such that the foliation is given by the 1-form
\[
\omega=(a(x)y^2+b(x)y+c(x))dx+h(x)dy.
\]

Let $q$ be a singularity for $\omega$. After a change in the $y$ coordinate, we can suppose $q=(0,0)$. Writing $h(x)=h_kx^k+...$, where $k>0$ and $h_k \neq 0$, we define the multiplicity of the fiber $R$ as $l(\mathcal F, R)=k$. We want to prove the following property of $\mathcal{F}$:

\begin{lemma}\label{L:Lemma Riccati}
The exceptional divisor of the reduction of singularities of $\mathcal F$ at $q=(0,0)$ is a chain of rational curves $L_1$,...,$L_n$ such that there is at most one non-invariant component, and if $L_i$ is such component then
\[
L_i\cap L_j \neq \oslash \Rightarrow Sing(\tilde{\mathcal F})\cap L_j=1-\delta_{ij}
\]
where $\tilde{\mathcal F}$ is the reduced foliation and $\delta_{ij}$ is the Kronecker's delta, that is, $\delta_{ii}=1$ and $\delta_{ij}=0$ if $i \neq j$.
\end{lemma}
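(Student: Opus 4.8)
The starting point is the local normal form of a Riccati foliation near an invariant fibre: around $q=(0,0)$ the foliation is given by $\omega=(a(x)y^2+b(x)y+c(x))\,dx+h(x)\,dy$ with $h(x)=h_kx^k+\cdots$, $h_k\neq 0$, $k=l(\mathcal F,R)>0$. Since $q$ is not assumed reduced, I would run the reduction-of-singularities (Seidenberg) algorithm by successive blow-ups at $q$ and its infinitely near points, keeping track at each stage of (i) which exceptional components are invariant, (ii) the Camacho–Sad indices / eigenvalue data at the new singular points, and (iii) how the ``horizontal'' direction (the fibre $R$) and the ``vertical'' direction (the line $\{y=0\}$, along which the $y$-coordinate is moving) sit with respect to the new exceptional divisor. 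The key bookkeeping device is that $\omega$ is Riccati, so $y=\infty$ is also invariant, and the projection $\pi$ is everywhere transverse to $\mathcal F$ off the invariant fibres; this rigidly constrains what blow-ups can produce.

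The main computation is the very first blow-up. Blowing up $q=(0,0)$ and looking in the chart $x=x_1$, $y=x_1 y_1$, one finds the transform of $\omega$; the exceptional divisor $E$ is invariant unless the lowest-order terms conspire, and the generic behaviour is governed by comparing the order $k$ of $h$ with the orders of $a,b,c$. I expect the outcome to be: either $E$ is invariant and we have reduced to a similar Riccati-type singularity with smaller $k$ (so induction on $k$ applies, the chain growing by one invariant curve), or a non-invariant component appears, and in that case the transversality forces it to carry exactly one singularity of $\tilde{\mathcal F}$ lying on the neighbouring invariant component and none of its own — which is exactly the dichotomy in the statement, with the $1-\delta_{ij}$ recording ``one singularity on each neighbour, none on the non-invariant curve itself.'' I would organize this as an induction on $l(\mathcal F,R)=k$: the base case $k=1$ gives a reduced singularity immediately (an empty chain, or a single blow-up), and the inductive step peels off one invariant curve $L_1$ and applies the hypothesis to the residual singularity, checking that the chain structure and the non-invariance clause are preserved and that at most one non-invariant component ever appears (it can only be created once, at the stage where the fibre multiplicity drops to a unit).

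The hard part will be controlling the appearance of the non-invariant component precisely and showing it is unique. Concretely, I must show: (a) a non-invariant exceptional curve can arise at most at one blow-up in the whole resolution, (b) when it arises, each of its (at most two) intersections with other chain members forces exactly one reduced singularity on that neighbour and none on the non-invariant curve, and (c) no further blow-up is needed on or near the non-invariant component. Point (b) is where Riccati transversality does the work: a non-invariant component meeting the resolution must be a fibre-like curve transverse to $\mathcal F$, so by the local Riccati model its intersection points with invariant curves are automatically reduced nondegenerate, and its self-intersection in the chain together with the index formula pins down the singularity count. I would carry out (a)–(c) by a careful but finite case analysis on the Newton data $(k;\,\operatorname{ord}a,\operatorname{ord}b,\operatorname{ord}c)$ at the first blow-up, using that after that first blow-up the situation is again a local Riccati fibre (possibly with a non-invariant section already accounted for), so that the induction closes without reintroducing a second non-invariant curve.
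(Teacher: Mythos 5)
Your overall strategy (induct on $k=l(\mathcal F,R)$, reduce to a terminal case, and locate the non-invariant component there) parallels the paper, but as written it has two genuine gaps, and both are exactly the points the paper's proof is built around. First, your base case is wrong: when $k=1$ (equivalently, when the $1$-form has nontrivial linear part) the singularity is \emph{not} ``reduced immediately''. The eigenvalue ratio can be a positive rational, and this is precisely the situation that produces a nontrivial chain and, in the dicritical subcase, the unique non-invariant component with exactly one singularity on each neighbour and none on itself --- i.e.\ the whole content of the $1-\delta_{ij}$ clause lives in your ``trivial'' base case. The paper isolates this case (algebraic multiplicity one, hence two linearly independent nonzero eigenvectors in the dicritical situation) and checks the chain structure there directly; your appeal to ``Riccati transversality forces reduced nondegenerate intersection points'' does not by itself yield the singularity count or the uniqueness of the non-invariant curve, which is the hard part you defer to an unspecified case analysis.

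Second, your inductive step assumes that after blowing up $q$ ``the situation is again a local Riccati fibre''. That is not literally true: the fibre of $\pi\circ\sigma$ over $\pi(q)$ is $R'\cup E'$, a singular fibre, so the normal form you start from does not apply to $E'$ on the blown-up surface. The paper repairs this with the ``flipping of fibre'' trick, which is absent from your plan: when the linear part is trivial one shows $a(0)\neq 0$, so $\omega$ has algebraic multiplicity two, $q$ is the only singularity on $R$, the corner $R'\cap E'$ is of type $d(xy)$, and one then \emph{contracts} $R'$ (a $(-1)$-curve), making $E'$ a genuine regular invariant fibre of a new Riccati foliation with $l(\mathcal F_1,R_1)<k$. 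Iterating this blow-up/blow-down reduces to the multiplicity-one case above, and the property is transported back because the resolution of the original $q$ differs from that of the flipped model only by blow-ups at other points. If you insist on pure blow-ups with Newton-data bookkeeping you would have to redo the local computation on $E'$ in each chart (including handling a possible second singularity on $E'$ and the corner with $R'$) and verify the chain structure by hand; that can be made to work, but it is substantially more delicate than your sketch suggests, and without the contraction step your induction hypothesis, as stated in terms of Riccati fibres, does not directly apply.
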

\begin{proof}
If the linear part of $\omega$ at $q$ is non trivial, the result can be checked directly. We then suppose that the linear part at $q$ is trivial. Then $b(0)=c(0)=c'(0)=0$ and $l(\mathcal F, R)=k>1$. Since $Sing(\omega)\subset Sing(\mathcal F)$ has codimension two, we have $a(0)\neq 0$. Therefore $\omega$ has algebraic multiplicity two at $q$. Since $b(0)^2-4a(0)c(0)=0$, $q$ is the unique singularity of $\mathcal F$ in $R$. The blow-up at $q$ has on $R'\cap E'$ ($E'$ is the exceptional divisor and $R'$ is the strict transform of $R$) a singularity of the type $d(xy)=0$ and no more singularities on $R'$. If we collapse $R'$, then $E'$ becomes a new fibre $R_1$ of a new Riccati foliation $\mathcal F_1$. In this way, there may be at most two singularities on $R_1$, but now $l(\mathcal F_1, R_1)<l(\mathcal F, R)=k$.

Applying this procedure ({\it flipping of fibre}) a finite number of times, we obtain a foliation $\mathcal F_m$ and an invariant fibre $R_m$ such that a generating 1-form for the foliation has algebraic multiplicity one. That is, if $\omega$ is that 1-form, then
\[
\omega_m=(a_m(x)y^2+b_m(x)y+c_m(x))dx+h_m(x)dy.
\]
with $c_m(0)=h_m(0)=0$, but $b_m(0)\neq 0$ or $c_m'(0)\neq 0$ or $h_m'(0)\neq 0$. Now, if the singularity $(0,0)$ is dicritical, then the generating vector field for the foliation has two non zero linearly independent eigenvectors, and the exceptional divisor of the reduction of singularities $\tilde{\mathcal F_m}$ at $(0,0)$ is a chain of rational curves $L_1$,...,$L_n$, such that if $L_i$ is the (unique) non-invariant component and $L_i\cap L_j \neq \varnothing$ then $Sing(\tilde{\mathcal F_m})\cap L_j=1-\delta_{ij}$. Since we can come back by blow-ups at points not equal to the $(0,0)$ point of $\mathcal F_m$ to the blow-up of the original foliation at the original singular point $q=(0,0)$, the property is also true for the reduction at $q$ and then we conclude the proof.

\end{proof}
%\begin{prop}\label{P:Riccati Link}
%Let $\mathcal F$ be a foliation on a compact complex surface $X$ and $C\subset X$ be a link for $\mathcal F$. Then $\mathcal F$ is not birational to a Riccati foliation.
%\end{prop}

%\begin{proof}

%\end{proof}

\begin{prop}\label{P:Riccati}
Let $\mathcal F$ be a foliation on a compact complex surface $X$. Let $C=C_1\cup ... \cup C_n$ be a cycle of $n$ invariant smooth rational curves, where $n>1$. Suppose that $C\cap Sing(\mathcal F)=\bigcup_{i\neq j} C_i \cap C_j$ are reduced non-degenerate singularities of $\mathcal F$. If $\mathcal F$ is Riccati with respect to a rational fibration $\pi:X\rightarrow B$, then every fibre of $\pi$ through a point of $C\cap Sing(\mathcal F)$ is completely supported on $C$.
\end{prop}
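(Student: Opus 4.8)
The plan is to fix one crossing point $p=C_i\cap C_j\in C\cap Sing(\mathcal F)$ and analyse the fibre $F_p$ of $\pi$ through it, the goal being to show that every irreducible component of $F_p$ is one of the curves $C_1,\dots,C_n$. The starting point is the structure of a Riccati foliation with respect to $\pi$ (see \cite[Chapter 4]{Brunella}, as recalled above): all but finitely many fibres of $\pi$ are smooth rational curves transverse to $\mathcal F$, and in particular contain no singularity of $\mathcal F$, while each of the remaining (``invariant'') fibres is $\mathcal F$-invariant and hence, being an invariant divisor, has all of its components $\mathcal F$-invariant. Since $p\in Sing(\mathcal F)$, the fibre $F_p$ is of the second kind; so from now on I may use that every component of $F_p$ is $\mathcal F$-invariant and that $F_p$ is connected (a rational fibration has connected fibres).

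Next I would identify the component of $F_p$ through $p$. Let $D_0$ be the irreducible component of $F_p$ containing $p$; by the above it is $\mathcal F$-invariant. Because $p$ is a reduced nondegenerate singularity, $\mathcal F$ has exactly two separatrices at $p$, both smooth and meeting transversally, and these must be $C_i$ and $C_j$, since those are already smooth $\mathcal F$-invariant curves crossing transversally at $p$. Consequently every germ of $\mathcal F$-invariant curve at $p$ lies in $(C_i\cup C_j,p)$, so each local branch of $D_0$ at $p$ runs along $C_i$ or $C_j$; as $D_0$, $C_i$, $C_j$ are irreducible this forces $D_0=C_i$ or $D_0=C_j$. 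Thus $F_p$ has at least one component contained in $C$.

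The main step is to rule out a component $D$ of $F_p$ with $D\not\subseteq C$. Assuming one exists: the components of $F_p$ lying in $C$ form a nonempty subfamily (by the previous step), those not lying in $C$ form a nonempty subfamily (by assumption), and since $F_p$ is connected these two subfamilies must meet at some point $s$, lying on a component $D\not\subseteq C$ and on a cycle-component $C_k\subseteq F_p$. Now $D$ and $C_k$ are both $\mathcal F$-invariant curves through $s$, and I split into two cases. If $s\in Sing(\mathcal F)$, then $s$ is a node, say $s=C_k\cap C_l$, again a reduced nondegenerate singularity whose only separatrices are $C_k$ and $C_l$; since $D$ is an invariant curve through $s$ this gives $D=C_k$ or $D=C_l$, contradicting $D\not\subseteq C$. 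If $s\notin Sing(\mathcal F)$, then $\mathcal F$ is regular at $s$ and has a unique leaf germ through it, which must equal both $(D,s)$ and $(C_k,s)$, so $D=C_k\subseteq C$, again a contradiction. Hence no component of $F_p$ lies outside $C$, which is exactly the assertion.

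I expect the only delicate ingredient to be the very first step — that a fibre of a Riccati foliation meeting $Sing(\mathcal F)$ is $\mathcal F$-invariant with all components invariant — but this is immediate from the local normal form $\omega=(a(x)y^{2}+b(x)y+c(x))dx+h(x)dy$ recalled above together with $Sing(\omega)\subseteq\{h(x)=0\}$ (a fibre $\{x=x_0\}$ can meet $Sing(\mathcal F)$ only when $h(x_0)=0$, in which case it is $\mathcal F$-invariant), and from the fact that the tangency locus of a Riccati foliation with $\pi$ is a union of entire fibres. The rest uses only the two defining features of a reduced nondegenerate singularity: exactly two smooth transverse separatrices, and no other germ of invariant curve through the point.
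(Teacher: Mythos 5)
There is a genuine gap, and it sits exactly where you flagged it: your first step, that a fibre of $\pi$ meeting $Sing(\mathcal F)$ is $\mathcal F$-invariant with all of its components invariant, is false. The normal form $\omega=(a(x)y^{2}+b(x)y+c(x))dx+h(x)dy$ is only valid in a product neighbourhood $D\times\mathbb P^1$ of a \emph{regular} fibre, whereas the fibre through $p$ is in general reducible; for a reducible fibre the tangency locus of $\mathcal F$ with $\pi$ is a union of fibre \emph{components}, not of entire fibres. Concretely, take the Riccati foliation $ydx-xdy$ on $\mathbb P^1\times\mathbb P^1$ (fibration given by the first projection) and blow up the dicritical singularity at $(0,0)$ lying on the invariant fibre $\{x=0\}$: the new fibre over $x=0$ is the union of the strict transform (invariant, and still carrying the reduced singularity at $(0,\infty)$) and the exceptional curve, which is \emph{not} invariant. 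So a fibre can contain singularities of $\mathcal F$ and nevertheless have non-invariant components. Such a component can cross an invariant component at a regular point of $\mathcal F$ without being a leaf, so in your main step the case $s\notin Sing(\mathcal F)$ does not give $D=C_k$; likewise your identification of the component through $p$ with $C_i$ or $C_j$ presupposes that this component is invariant, which has not been established.

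Excluding these non-invariant components is precisely the content of the paper's proof: it writes $R=C_{i_1}\cup\dots\cup C_{i_k}\cup E_1\cup\dots\cup E_l$, contracts $R$ to a regular fibre (Theorem \ref{T:Fibrations}), checks that this contraction does not affect the cycle $C$, and then applies Lemma \ref{L:Lemma Riccati}, which asserts that in such a configuration there is at most one non-invariant component and that it can only meet invariant components carrying exactly one singularity of the reduced foliation. Since each $C_i$ in the cycle carries two singularities, a non-invariant $E_j$ cannot meet the cycle, and connectedness of $R$ then forces $\{E_1,\dots,E_l\}=\varnothing$. Your separatrix and connectedness arguments are fine as far as they go, but without this input (or an equivalent analysis of reducible fibres of Riccati foliations) the proof is not complete.
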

\begin{proof}
Let $p\in C\cap Sing(\mathcal F)$. If $R=\pi^{-1}(\pi(p))$ is the fibre through $p$, we can write
\[
R=C_{i_1}\cup...\cup C_{i_k}\cup E_1 \cup...\cup E_l
\]
where $i_1,..., i_k \in \{1,..., n\}$ and $E_1,..., E_l$ are smooth rational curves not in $\{C_1,..., C_n\}$, and, by Theorem \ref{T:Fibrations} (see \cite{BPV}, page 192), there is a birational transformation 
%If $p\in C_0$, the result follows by hypothesis, hence the case $n=1$ is obvious. We then suppose for which follows that $n>1$ and $p$ is not in $C_0$.
\[
\sigma=\sigma_m\circ...\circ\sigma_1:X\rightarrow Y
\]
where each $\sigma_i$, $i=1,...,m$, is a blow-up at a point $p_i$, such that $S=\sigma(R)$ is a regular fibre for the fibration $\rho=\pi\circ\sigma^{-1}$($\sigma$ is contraction of components of $R$).

Note that if we blow-up a regular point of a foliation, the exceptional divisor is invariant, with only one singularity on it, of type $xdy+ydx$. Therefore if $p_i$ is a regular point for the induced foliation $(\sigma_m\circ...\circ\sigma_i)_*\mathcal F$, then $(\sigma_m\circ...\circ\sigma_i)^{-1}(p_i)=D_1\cup...\cup D_r$ is $\mathcal F$-invariant and there exists $D_l$ (rational curve) such that $\#D_l\cap(D_1\cup...\cup \widehat{D_l}\cup...\cup D_r)=\# D_l\cap Sing(\mathcal F)=1$. Now, if $C\cap (\sigma_m\circ...\circ\sigma_i)^{-1}(p_i)\neq \varnothing$, then, since $(\sigma_m\circ...\circ\sigma_i)^{-1}(p_i)$ is connected and $\mathcal{F}$-invariant, we conclude that $(\sigma_m\circ...\circ\sigma_i)^{-1}(p_i)\subset C$, hence $D_l=C_{i_l}$, which result the contradiction $1=\# D_k\cap Sing(\mathcal F)=\# C_{i_l}\cap Sing(\mathcal F)=2$. Then, if we contract $(\sigma_m\circ...\circ\sigma_i)^{-1}(p_i)$, we don't affect the cycle $C$.

 So we can look at $\sigma$ as a reduction of singularities of $\sigma_*(\mathcal F)$ in $S$ and use Lemma \ref{L:Lemma Riccati} to conclude: if $p\in C_i\cap C_j$ then $C_i$ or $C_j$ is a component of $R$, otherwise we will have a non-invariant component of $R$ with singularity. 

If the set $\{E_1,...,E_l\}$ is not empty, since $R$ is connected, there exist $C_i$ and $E_j$ components of $R$ such that $C_i\cap E_j\neq \varnothing$. Then $E_j$ is not $\mathcal F$-invariant. But $C_i$ has two singularities, then by Lemma \ref{L:Lemma Riccati} $C_i$ cannot intersect $E_j$. Then we have $\{E_1,...,E_l\}=\varnothing$.

\end{proof}

%\begin{prop}\label{P:Riccati}
%Let $\mathcal F$ be a foliation on a compact complex surface $X$. Let $C=C_0\cup C_1\cup ... \cup C_n$ be a cycle of invariant rational curves. Suppose that $C\cap Sing(\mathcal F)=\bigcup_{i\neq j} C_i \cap C_j$ are reduced non-degenerate singularities of $\mathcal F$. If $C_0^2=0$ and $\pi:X\rightarrow B$ is a rational fibration for which $C_0$ is a regular fibre, then every fibre of $\pi$ thought a point of $C\cap Sing(\mathcal F)$ is completely supported in $C$.
%\end{prop}

\begin{definition}\label{D:Cycle}
Let $\mathcal F$ be a foliation on a compact complex surface $X$. A {\it $(k,l)$-cycle} for $\mathcal F$ is a cycle of $k>1$ smooth rational curves $C=C_1 \cup ... \cup C_k\subset X$ such that:
\begin{enumerate}
\item $C^2=n>0$;
\item $C_i^2=l$, $i=1,..., n$;
\item $C$ is $\mathcal F$-invariant;
\item $C\cap Sing(\mathcal F)=\bigcup_{i\neq j} C_i \cap C_j$ are reduced nondegenerate singularities of $\mathcal F$.
\end{enumerate}
\end{definition}

\begin{cor}\label{C:(k,l)-cycle}
Let $\mathcal F$ be a foliation on a compact complex surface $X$ and let $C=C_1 \cup ... \cup C_k\subset X$ be a $(k,l)$-cycle for $\mathcal F$. Then $(k,l)\in \{(2,-1),(3,-1),(3,1),(6,-1)\}\cup \{(2m,0)\mid m\in \mathbb N\}$.
\end{cor}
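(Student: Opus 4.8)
My plan is to extract from a $(k,l)$-cycle its numerical data — the self-intersections of the cycle and of its components, the Camacho–Sad indices of $\mathcal F$ along the $C_i$, and the signature bound of the Hodge index theorem — and then, in the handful of cases these leave open, to create an invariant fibre by a small birational surgery and invoke Proposition~\ref{P:Riccati}. To start: since opposite components of a cycle are disjoint and consecutive ones meet once, the cycle has exactly $k$ nodes and $C^{2}=\sum_i C_i^{2}+2\sum_{i<j}C_i\cdot C_j=kl+2k=k(l+2)$, so $C^{2}>0$ forces $l\ge-1$. If moreover $l\ge 1$, then $C_1^{2}=C_2^{2}=l>0$, so $C_1$ lies in the positive cone and the Hodge index theorem gives $(C_1\cdot C_2)^{2}\ge C_1^{2}C_2^{2}=l^{2}$; as $C_1\cdot C_2$ equals $1$ for $k\ge3$ and $2$ for $k=2$, this yields $l\le1$ when $k\ge3$ and $l\le2$ when $k=2$. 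Hence $l\in\{-1,0,1\}$, with $l=2$ possible a priori only if $k=2$.

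Next I would use Camacho–Sad. Each $C_i$ is a smooth compact invariant rational curve whose only points of $\mathrm{Sing}(\mathcal F)$ are its two nodes, both nondegenerate; at such a node the two Camacho–Sad indices are mutually inverse, so writing $\kappa_i$ for the index of $\mathcal F$ along $C_i$ at $C_i\cap C_{i+1}$ one gets the cyclic recursion $l=C_i^{2}=\kappa_i+\kappa_{i-1}^{-1}$, with every $\kappa_i\in\mathbb C^{\ast}$ and, by reducedness, $\kappa_i\notin\mathbb Q_{>0}$. The recursion is the M\"obius action of $M=\left(\begin{smallmatrix}l&-1\\1&0\end{smallmatrix}\right)\in\mathrm{SL}_2(\mathbb Z)$ on $\mathbb P^1$, and closing the cycle forces $M^{k}$ to fix $\kappa_1$; unless $M^{k}=\pm\mathrm{Id}$, all the $\kappa_i$ must then equal a common root of $t^{2}-lt+1$. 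For $l=2$ the matrix $M$ is parabolic, so $M^{k}\ne\pm\mathrm{Id}$ for every $k$ and the common value is $1\in\mathbb Q_{>0}$ — excluded. We are thus reduced to $l\in\{-1,0,1\}$ (and one can carry the $\kappa_i$ further as a check on what follows).

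To bound $k$: the intersection matrix of the cycle is $lI+P_k$, with $P_k$ the cyclic adjacency matrix (a double edge for $k=2$), whose eigenvalues are $l+2\cos(2\pi j/k)$; the one for $j=0$ is $l+2>0$, and the Hodge index theorem forbids a second positive eigenvalue on the span of the $C_i$ in $\mathrm{NS}(X)$. Inspecting the remaining eigenvalues, this fails once $k\ge4$ when $l=1$ (leaving $k\in\{2,3\}$) and once $k\ge7$ when $l=-1$ (leaving $k\in\{2,3,4,5,6\}$). For $l=0$ the Hodge bound is empty; instead each $C_i^{2}=0$, so by Theorem~\ref{T:Fibrations} a modification that is an isomorphism near the cycle makes some $C_{i_0}$ a fibre of a rational fibration $\pi$. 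Since $C_{i_0}$ carries singularities of $\mathcal F$, the latter is not the fibration $\pi$; and because $T_{\mathcal F}|_{C_{i_0}}=T_{C_{i_0}}(-p-p')$ has degree $0$ (where $p,p'$ are the two nondegenerate singularities on $C_{i_0}$), one gets $N_{\mathcal F}\cdot(\text{fibre})=2$, so $\mathcal F$ is Riccati for $\pi$. Two consecutive $C_i$ meet, hence cannot both be $\pi$-vertical; a parity analysis of the vertical components — a vertical $C_i$ is a fibre, a horizontal one a section, and by Proposition~\ref{P:Riccati} no node may lie on two horizontal or two vertical components — forces the vertical/horizontal pattern to alternate, i.e. $k$ is even.

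It remains to exclude $(k,l)\in\{(2,1),(4,-1),(5,-1)\}$. Blowing up a node of a $(2,1)$-cycle, or contracting a $(-1)$-curve of a $(4,-1)$- or a $(5,-1)$-cycle, produces an $\mathcal F$-invariant cycle having a component of square $0$ — hence, as above, an invariant fibre of a Riccati foliation — together with at least two further invariant components, which are then forced to be sections (a surviving square-$(-1)$ component would have to be $\pi$-vertical, hence a fibre, which is absurd); Proposition~\ref{P:Riccati} applied to a node lying on two of those sections contradicts the fibre through that node being supported on the cycle, so these cases do not occur and the list is exactly $\{(2,-1),(3,-1),(3,1),(6,-1)\}\cup\{(2m,0):m\in\mathbb N\}$. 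The genuinely delicate point is what is used tacitly in the last two paragraphs: the modification supplied by Theorem~\ref{T:Fibrations} is only guaranteed to be biholomorphic near the chosen square-zero curve, and one must upgrade this to a biholomorphism near the \emph{whole} modified cycle — i.e. show no exceptional curve of the modification meets the cycle and no component is contracted — so that $\mathcal F$ remains Riccati and the hypotheses of Proposition~\ref{P:Riccati} survive. Here the positivity $C^{2}>0$ enters: $X\setminus C$ is strongly pseudoconvex (Lemma~\ref{Pseudoconvexity}) and carries essentially no compact curves, and the meromorphic extension Lemma~\ref{Extension} is the natural tool for locating the offending exceptional curves away from $C$.
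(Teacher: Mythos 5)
Your argument is correct in substance, but it takes a genuinely different route from the paper. The paper's proof is purely birational: for $l>0$ it blows up $l$ times at a node to create a square-zero component $D_1$, observes the foliation is Riccati with $D_1$ a fibre, and uses Proposition~\ref{P:Riccati} to kill the fibre through a node away from $D_1$ (as written the paper even concludes that no $(k,l)$-cycle with $l>0$ exists, which overshoots its own statement and its construction in subsection~\ref{S:C^2=3}; your treatment correctly leaves $(3,1)$ alive); for $l=0$ it runs the alternating-fibre induction that you recast as vertical/horizontal alternation around the cycle; for $l=-1$ it only says ``use contractions instead of blow-ups''. You instead front-load numerical constraints --- the Hodge index bound on the Gram matrix $lI+P_k$ and the Camacho--Sad/M\"obius recursion --- to reduce to the residual cases $(2,1)$, $(4,-1)$, $(5,-1)$, and only there invoke the blow-up/contraction surgery plus Proposition~\ref{P:Riccati}. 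This buys a sharper and more explicit argument (in particular it fills in the paper's terse $l=-1$ case, and the index data is reusable later). One slip: your claim that ``the Hodge bound is empty for $l=0$'' is false --- for $k\ge5$ the circulant $P_k$ has at least three positive eigenvalues ($2\cos(2\pi j/k)$ for $j=0,1,k-1$), so Hodge index already forces $k\le4$ there; this is harmless for the corollary, which only asserts an inclusion, but the sentence as written is wrong.

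Two loose ends you should tighten. (i) To apply Proposition~\ref{P:Riccati} to the cycle obtained by contracting a $(-1)$-component of a $(4,-1)$- or $(5,-1)$-cycle, you must verify that the new crossing point created by the contraction is again a reduced nondegenerate singularity, since the proposition assumes this at every crossing; you only gesture at this (``as a check''). It does hold, and your own recursion delivers it: for $l=-1$ the matrix has order three in $\mathrm{PGL}_2(\mathbb C)$, so when $3\nmid k$ the closing condition forces every $\kappa_i$ to be a primitive cube root of unity, and the contracted singularity has eigenvalue ratio of the form $1+\kappa^{-1}$, which is neither $0$ nor a positive rational. (The $(2,1)$ case similarly uses, silently but correctly, that blowing up a reduced nondegenerate node produces reduced nondegenerate corners, with ratios $\lambda-1$, etc.) (ii) Your closing ``delicate point'' is not actually delicate, and the remedy you sketch via Lemmas~\ref{Pseudoconvexity} and \ref{Extension} is a hand-wave that proves nothing as stated. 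It is also unnecessary: Theorem~\ref{T:Fibrations} guarantees that no exceptional curve of the modification meets the chosen square-zero curve, hence no point of its image is blown up, so composing the modification with the ruling of $Y$ gives a genuine rational fibration on $X$ itself having that curve as a full reduced fibre; the rest of the cycle is untouched, and Riccati-ness follows from $\mathcal Z(\mathcal F,C_{i_0})=2$ exactly as in the paper's own appeal to \cite[Chapter 4, Proposition 1]{Brunella}. Finally, your parenthetical that a surviving $(-1)$-component ``would have to be $\pi$-vertical, hence a fibre'' is garbled (vertical only means contained in a fibre); the intended conclusion --- it must be horizontal, because the fibre containing it would also have to contain the square-zero fibre component it meets, contradicting $F\cdot C_{i_0}=0$ --- is correct and easy to supply.
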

\begin{proof}
The proof is just an easy application of Proposition \ref{P:Riccati}, using suitable blow-ups at the crossing points of the cycle or blow-downs of exceptional curves.

Let $C=C_1 \cup ... \cup C_k\subset X$ be a $(k,l)$-cycle for a foliation $\mathcal F$ on $X$. We can suppose that $C_i \cap C_{i+1}=\{p_i\}$, $i=1,...,k-1$, and $C_k \cap C_1=\{p_k\}$, where the $k$ points $p_1,...,p_k$ are distinct.

If $l>0$, choose $z\in C$ a crossing point. After a suitable sequence of $l$ blow-ups beginning at $z$, we obtain a new cycle of rational curves
\[
\tilde{C}=E_{l} \cup ... \cup E_1 \cup D_1 \cup D_2 \cup ... \cup D_{k}
\]
where $D_1^2=0$, $E_1^2=-1$, $E_2^2=-2$,..., $E_{l}^2=-2$, $D_2^2=l$, $D_3^2=l$,..., $D_{k}^2=l-1$, and $D_1 \cap E_1=\{p\}$, $D_1 \cap D_2=\{q\}$. Then, the foliation $\mathcal F$ is Riccati with respect to a rational fibration that has $D_1$ as a regular fibre. By Proposition \ref{P:Riccati}, a fibre $R$ through a point not in $D_1$ must be supported on $\tilde C$, and such a fibre must be also disjoint from $D_1$, since $D_1$ is a fibre. That is, we must have $R\subset \tilde C - D_1\subset E_{l} \cup ... \cup E_1 \cup D_2 \cup ... \cup D_{k}$. Since $D_1 \cap E_1\neq\varnothing$ and  $D_1 \cap D_2\neq\varnothing$, $R\subset \tilde C - (D_1\cup E_1 \cup D_2)\subset E_{l} \cup ... \cup E_2 \cup D_3 \cup ... \cup D_{k}$. If $k=2$ and $l=1$, then, in fact, $R\subset \tilde C - (D_1\cup E_1 \cup D_2)=\varnothing$, and we obtain a contradiction, since $R$ cannot be empty. For $k>2$ or $l>1$, every connected curve supported on $E_{l} \cup ... \cup E_2 \cup D_3 \cup ... \cup D_{k}$ cannot be contracted to a rational curve of zero self-intersection, hence cannot be a fibre of a rational fibration. Therefore, there is no $(k,l)$-cycle if $l>0$.

%Now, consider that $l=1$. Choose the crossing point $p_1$ of the cycle. After one blow-up at $p_1$, we obtain the new cycle
%\[
%\tilde{C}=E \cup D_1 \cup D_2 \cup ... \cup D_{k}
%\]
%where $D_1^2=0$, $E^2=-1$, $D_2^2=0$, $D_3^2=1$, ..., $D_{k}^2=1$, and $D_1 \cap E_1=\{p\}$, $D_1 \cap D_1=\{q\}$ ($D_i$ is the strict transform of $C_i$). Again we have a rational fibration with $D_1$ as a regular fibre and $\mathcal F$ is Riccati with respect to these fibration. If $k>3$, take $R$ the fibre through $\tilde {p_3}=D_3\cap D_4$. Analogously to the argument in the previous paragraph, we must have $R\subset D_3\cup ... \cup D_{k-1}$, but it's impossible. In the same way we also conclude that there is no $(2,1)$-cycle.

Now, suppose $l=0$. Then, since $C_i^2=0$, $i=1,... , k$, we don't need take blow-ups to produce rational fibrations. Just choose, for example, $C_1$ as the fibre $R_1$ of a rational fibration and $\mathcal F$ Riccati with respect to this fibration. Suppose that $k=2m+1$ is odd. Take the fibre $R_2$ trough the crossing point $p_3$. Since $R_2$ must be supported on $C$, we obtain $R_2=C_3$. By the same reason, the fibre $R_3$ through the crossing point $p_5$ is $R_3=C_5$. Inductively, we obtain that the fibre $R_i$ through $p_{2i-1}$ is $R_i=C_{2i-1}$. Then $R_{m+1}=C_{2m+1}=C_k$ is the fibre through $p_{2m+1}=p_k$, which is impossible since the fibre through $p_k=p_{2m+1}$ is just $C_1\neq C_k$. Hence, if $l=0$, then $k$ must be even.

Finally, using contractions instead of blow-ups, we can conclude that there is no $(k,-1)$-cycle if $(k,-1)$ is not in $\{(2,-1),(3,-1),(6,-1)\}$.

\end{proof}

We can now give here a different proof of \cite [Chapter 3, Proposition 4]{Brunella}.

\begin{prop}\label{P:not Riccati}
Let $\mathcal F$ be one of the foliations $\mathcal F_1$, $\mathcal F_2$ or $\mathcal F_3$. Then $\mathcal F$ is not birrational to a Riccati foliation. 
\end{prop}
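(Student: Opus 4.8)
The plan is to argue by contradiction: suppose one of $\mathcal F_1$, $\mathcal F_2$, $\mathcal F_3$ is birational to a Riccati foliation, and derive a contradiction via the combinatorics of cycles established in Corollary \ref{C:(k,l)-cycle}. First I would recall that each $\mathcal F_i$ was constructed as a quotient of a foliation carrying an explicit invariant cycle: $\mathcal F_3$ comes from $(\mathbb P^2,\mathcal L)$ with an invariant cycle of three lines permuted cyclically by $\gamma$; $\mathcal F_2$ from $(\mathbb P^1\times\mathbb P^1,\mathcal M)$ with an invariant cycle of four lines permuted cyclically by $\beta$; $\mathcal F_1$ from $(Bl_3(\mathbb P^2),\mathcal N)$ with an invariant cycle of six $(-1)$-curves permuted cyclically by $\alpha$. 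The key point is that the link $C$ of self-intersection $n\in\{1,2,3\}$ of $\mathcal F_i$ is precisely the image of this cycle under the quotient map, and conversely, pulling back by the cyclic cover recovers an $(n+2)$-cycle (or rather a $(k,l)$-cycle in the sense of Definition \ref{D:Cycle}) on the smooth model: namely a $(3,1)$-cycle for $\mathcal F_3$, a $(4,0)$-cycle for $\mathcal F_2$, and a $(6,-1)$-cycle for $\mathcal F_1$.

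Next I would show that the property of admitting such a cycle is a birational invariant, in the sense needed here: if $\mathcal F$ is birational to a Riccati foliation $\mathcal G$ on some surface, then after resolving the birational map and tracking the cycle through blow-ups and blow-downs (using that blowing up a regular point or a reduced nondegenerate singularity of a foliation produces an invariant exceptional divisor with controlled singularities, exactly as in the proof of Proposition \ref{P:Riccati}), the cycle $C$ on $\mathcal F$'s surface transforms into a $(k,l)$-cycle on the Riccati surface, possibly after adjusting $k$ and $l$ but keeping $C^2=n>0$ fixed. In particular one produces, on the surface carrying the Riccati foliation, a cycle of smooth rational curves that is $\mathcal G$-invariant, with reduced nondegenerate singularities only at the crossing points, of positive total self-intersection. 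This is exactly the configuration forbidden — or constrained — by Corollary \ref{C:(k,l)-cycle}: it must be one of $(2,-1)$, $(3,-1)$, $(3,1)$, $(6,-1)$, or $(2m,0)$.

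The contradiction then comes from the fact that a Riccati fibration on the Riccati model would force the cycle, via Proposition \ref{P:Riccati}, to be "compatible" with the fibration in a way that none of the surviving cases $(3,1)$, $(4,0)$, $(6,-1)$ can satisfy. Concretely: in the proof of Corollary \ref{C:(k,l)-cycle} it is shown that for $l>0$ there is no $(k,l)$-cycle at all once one performs the $l$ blow-ups and applies Proposition \ref{P:Riccati}, except that the argument there actually rules out $(3,1)$ itself — wait, no: the corollary \emph{allows} $(3,1)$. So the real mechanism must be that when $\mathcal F_3$ (Brunella's very special foliation) is assumed Riccati, the $(3,1)$-cycle together with the Riccati structure forces, after the blow-up normalization in Corollary \ref{C:(k,l)-cycle}'s proof, a fibre supported on a chain that cannot be a rational fibre — the same "$R\subset\tilde C-(D_1\cup E_1\cup D_2)=\varnothing$" contradiction, now triggered because the Riccati fibration on the \emph{new} model and the one giving the original $\mathcal F_3$ structure would have to coexist on the same surface with two distinct rational fibrations interacting with the cycle. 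I would make this precise by noting that each $\mathcal F_i$ already carries a non-Riccati structure forced by the geometry of the quotient (the quotient of a linear foliation on $\mathbb P^2$ or $\mathbb P^1\times\mathbb P^1$), and then checking that the cycle data $(3,1)$, $(4,0)$, $(6,-1)$ lead to incompatible Euler-characteristic or index computations on the Riccati model — the main obstacle being to verify that the birational transformation relating $\mathcal F$ to the hypothetical Riccati foliation can always be arranged to preserve the cycle $C$ (not just its total self-intersection) intact, so that Proposition \ref{P:Riccati} and Corollary \ref{C:(k,l)-cycle} apply verbatim on the Riccati side.

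The step I expect to be the main obstacle is precisely this birational transport of the cycle: a general birational map between surfaces need not send the link $C$ to a nice cycle, and one must argue that the intermediate blow-ups can be taken at crossing points of the cycle (or at points off the cycle, which then do not affect it), using the reduced-nondegenerate hypothesis at the node $p$, so that on the Riccati model one genuinely has a $(k,l)$-cycle with $k$ and $l$ determined; once that is in place, Corollary \ref{C:(k,l)-cycle} together with the Riccati fibration restricts $(k,l)$ so severely that a direct case check against the three values $(3,1),(4,0),(6,-1)$ finishes the proof.
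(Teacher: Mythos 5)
There is a genuine gap here, and in fact the strategy is structurally unworkable. The configurations you propose to transport to the hypothetical Riccati model are the cycles on the \emph{covering} surfaces --- the $(3,1)$-cycle of lines invariant by $\mathcal L$, the $(4,0)$-cycle for $\mathcal M$, the $(6,-1)$-cycle for $\mathcal N$ --- and no contradiction can ever come from them: as you notice yourself midway (``the corollary allows $(3,1)$''), Corollary \ref{C:(k,l)-cycle} admits exactly the values $(3,1)$, $(4,0)=(2\cdot 2,0)$ and $(6,-1)$. This is unavoidable, because the covering foliations are themselves (birational to) Riccati foliations: $\mathcal L$ is defined by $\lambda y\,dx-x\,dy=(0\cdot y^{2}+\lambda y+0)\,dx+(-x)\,dy$, which is in Riccati form with respect to the pencil $x=\mathrm{const}$ (the pencil of lines through the corner $[0:1:0]$ of the invariant triangle), and similarly for $\mathcal M$ and $\mathcal N$. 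Hence an argument that only uses data visible on the cover cannot distinguish $\mathcal F_i$ from a Riccati foliation, and your fallback suggestions (``two coexisting rational fibrations'', unspecified Euler-characteristic or index computations) are not carried out and do not supply the missing contradiction.

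What separates the quotients $\mathcal F_1,\mathcal F_2,\mathcal F_3$ from their covers is precisely the object your argument never uses: the link itself, a rational curve $C$ with one node $p$ and $C^{2}=n\in\{1,2,3\}$ on the quotient surface. The paper's proof consists of a single blow-up at $p$: the strict transform $\tilde C$ (with $\tilde C^{2}=n-4\le -1$) and the exceptional $(-1)$-curve $E$ form a cycle of two smooth rational curves meeting at two reduced nondegenerate singularities, and Proposition \ref{P:Riccati} then says that if the foliation were Riccati with respect to a rational fibration, the fibre through either crossing point would be completely supported on $\tilde C\cup E$. No such fibre exists: a fibre $F=a\tilde C+bE$ would satisfy $F\cdot\tilde C=F\cdot E=0$, giving $b=2a$ and then $an=0$, so $a=b=0$ (equivalently, a fibre of a rational fibration cannot contain a loop of rational curves). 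Your legitimate worry about transporting the configuration across the birational map to the putative Riccati model is the same bookkeeping as in the proof of Corollary \ref{C:(k,l)-cycle} (this is what ``just like before'' refers to), but it concerns the two-component cycle over the nodal link on the quotient, not the $(3,1)$-, $(4,0)$- or $(6,-1)$-cycles upstairs; that misplacement is where your proposal goes astray.
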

\begin{proof} Just like before, after one blow-up at the nodal point in the link of $\mathcal F$, we conclude, by Proposition \ref{P:Riccati}, that $\mathcal F$ cannot be Riccati.

\end{proof}

\section{Proof of the Theorem \ref{T:Main} }\label{S:Proof}

\subsection{Preliminary computations} Let $p$ be the node of $C$ and $C^2=n$ a positive integer. If the hypotheses for the foliation are as in the Introduction \ref{S:Introduction} (that is, $C$ is a link for $\mathcal F$), we can use the Camacho-Sad formula to calculate the quotient of eigenvalues of $\mathcal F$ at $p$ (see \cite[Chapter 3]{Brunella}):

\[
n=C^2=CS(\mathcal F, Y, p)=\lambda+2+\frac{1}{\lambda} .
\]

Then we have the equation

\[
\lambda^2+(2-n)\lambda+1=0
\]
whose solution is

\[
\lambda=\frac{n-2\pm\sqrt{n(n-4)}}{2} .
\]
 
Therefore:
\begin{enumerate}
\item if $C^2=1$ then $-\lambda$ is a $6^{th}$ primitive root of unit;
\item if $C^2=2$ then $-\lambda$ is a $4^{th}$ primitive root of unit;
\item if $C^2=3$ then $-\lambda$ is a $3^{th}$ primitive root of unit;
\item if $C^2=4$ then $\lambda=1$;
\item if $C^2>4$ then $\lambda$ is a positive irrational number.
\end{enumerate}
 
\subsection{Basic lemmas and propositions} Here we will develop some more "technology" for the proof of our main result.

The next lemma is the generalization of \cite [Chapter 3, Lemma 1]{Brunella}. The proof is essentially the same.

\begin{lemma}
Let $\mathcal F$ be a foliation on a compact complex surface $X$ and let $C\subset X$ be a link for $\mathcal F$ with node $p\in C$. Let $L=N_{\mathcal{F}}^{*}\otimes\mathcal{O}_{X}(C)$ and $\lambda$ be the quotient of eigenvalues at $p$. Suppose that $-\lambda$ is a $k^{th}$ primitive root of unit, $k>2$. Then there exists a neighbourhood $U$ of $C$ such that ${L^{\otimes k}}_{\mid {U}}=\mathcal{O}_U$.
\end{lemma}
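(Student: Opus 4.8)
The plan is to construct the trivialization of $L^{\otimes k}$ on a neighbourhood of $C$ by producing, near the node $p$, a local meromorphic first integral (or more precisely a local section of $L^{\otimes k}$) that is compatible with the foliation, and then to extend it over the smooth locus of $C$ using the fact that $C$ is a single smooth rational curve away from $p$, hence the relevant cohomology vanishes. First I would recall the standard local picture at the reduced nondegenerate singularity $p$: since $-\lambda$ is a primitive $k$-th root of unity with $k>2$, the singularity is linearizable (it is in the Poincaré domain, as $\lambda\notin\mathbb R_{>0}$), so in suitable local coordinates $(x,y)$ centred at $p$ the foliation is given by $\omega_0 = \lambda\, y\,dx + x\,dy$ (up to sign/normalization), and the two local branches of $C$ are $\{x=0\}$ and $\{y=0\}$. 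The monomial $x^a y^b$ with $a/b$ a suitable ratio built from $k$ — concretely, writing $-\lambda = e^{2\pi i q/k}$ in lowest terms, the monomial whose associated weights kill the linear vector field — is a local first integral of $\mathcal F^{\otimes}$, and its logarithmic differential $d(x^a y^b)/(x^a y^b)$ is, up to a nonvanishing factor, $\omega_0$ itself. This gives a local nonvanishing section of $N_{\mathcal F}^*$ twisted appropriately, i.e. a local generator of $L^{\otimes k}$ near $p$ that is adapted to $C$.

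Next I would globalize. The line bundle $L = N_{\mathcal F}^*\otimes \mathcal O_X(C)$ restricted to $C$: by the general adjunction-type formula relating the conormal bundle of an invariant curve, the canonical bundle of the foliation and $\mathcal O_X(C)$ (the formula $N_{\mathcal F}^*|_C = N_C^*\otimes$ (twist by the singular scheme on $C$), i.e. the Camacho–Sad / index content already used in the preceding subsection), one computes $\deg(L^{\otimes k}|_C)=0$; indeed the Camacho–Sad computation $n = \lambda + 2 + 1/\lambda$ together with $k\lambda \in$ (the lattice making $-\lambda$ a root of unity) is exactly what forces the degree of the $k$-th power to vanish. Since $C$ is a rational nodal curve with a single node, $H^1(C,\mathcal O_C)$ is one-dimensional (arithmetic genus $1$), so a degree-zero line bundle on $C$ need not be trivial; this is where the node must be used essentially. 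The point is that the local section at $p$ constructed above is a \emph{section with prescribed behaviour across the node}, and on the normalization $\mathbb P^1$ the bundle pulls back to $\mathcal O_{\mathbb P^1}$, so one gets a section there; the gluing data at the two preimages of $p$ is precisely matched by the monomial first integral (the ratio of the two local branch-coordinates is built into the $k$-th root of unity), so the section descends to $C$ and generates $L^{\otimes k}|_C$.

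Finally I would pass from $C$ to a neighbourhood $U$ of $C$. Since $C^2=n>0$, a tubular neighbourhood of $C$ is strongly pseudoconvex (Lemma~\ref{Pseudoconvexity}), and $L^{\otimes k}$, being trivial on $C$ with a distinguished generator, extends to a neighbourhood: one uses the formal/convergent principle (Ueda-type argument, Lemma~\ref{Extension}) that a section of a line bundle on the curve which is unobstructed at each infinitesimal order — the obstructions living in $H^1(C, L^{\otimes k}\otimes \mathcal O_C(-C)^{\otimes j}) = H^1(C, (\text{negative degree bundle on a genus-1 curve}))$, which vanishes for $j\ge 1$ since $-jn<0$ — lifts to a formal section, and then converges on a genuine neighbourhood by pseudoconvexity. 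The nonvanishing is automatic near $C$ by continuity since the generator does not vanish on $C$. Shrinking $U$ if necessary gives ${L^{\otimes k}}|_U \cong \mathcal O_U$.

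The main obstacle I expect is the descent step across the node: because $p_a(C)=1$, a degree-zero line bundle on $C$ is genuinely not automatically trivial, so the argument cannot be purely numerical — one must exhibit the generator explicitly and check that the transition function it determines at the node is exactly the one making $L^{\otimes k}|_C$ trivial. This is where the hypothesis $k>2$ and the primitive-root-of-unity condition on $-\lambda$ are used in an essential way (for $k\le 2$ the monomial first integral degenerates), and where the Camacho–Sad number $n=\lambda+2+1/\lambda$ feeds in to make the multiplicities match. The extension-to-a-neighbourhood step, by contrast, is routine given Lemmas~\ref{Extension} and \ref{Pseudoconvexity} and the vanishing $H^1(C,\mathcal O_C(-jC))=0$ for $j\ge 1$.
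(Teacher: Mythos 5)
Your key local construction at the node does not exist. Since $-\lambda$ is a primitive $k^{th}$ root of unity with $k>2$, $\lambda$ is not real; the linear foliation $\lambda y\,dx - x\,dy$ admits a monomial first integral $x^ay^b$ (equivalently, $d(x^ay^b)/(x^ay^b)$ defines it up to a unit) only when $\lambda=-a/b$ is a negative rational, and no power helps either, since $k\lambda\notin\mathbb Z$. Hence the ``distinguished generator of $L^{\otimes k}$ near $p$'' of your first paragraph, and the descent-across-the-node step of your second paragraph which relies on it, collapse. What does exist at $p$ (Poincar\'e linearisation) is a closed logarithmic $1$-form $\eta_p=\frac{dz}{z}-\lambda\frac{dw}{w}$, whose residues on the two branches of $C$ differ by the factor $-\lambda$. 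The paper's proof is built on this: covering $C$ by open sets on which $\mathcal F$ is defined by closed logarithmic $1$-forms $\eta_j$ with poles on $C$ that are holonomy-invariant on transversals (using that the holonomy of the leaf $C\setminus\{p\}$ is hyperbolic, hence linearisable), the transition functions $f_{ij}$ in $\eta_i=f_{ij}\eta_j$ are constants; so $L|_U$ is given by a locally constant cocycle, its monodromy along the loop of $C$ is $(-\lambda)^{\pm1}$ by the residue comparison at $p$, and $(-\lambda)^k=1$ trivialises $L^{\otimes k}$.

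Your extension step from $C$ to a neighbourhood is also unsound as written. Triviality of $L^{\otimes k}|_C$ alone does not propagate to a tube, and the vanishing you invoke is false: since $p_a(C)=1$ and $\omega_C\cong\mathcal O_C$, Serre duality gives $H^1(C,\mathcal O_C(-jC))\cong H^0(C,\mathcal O_C(jC))^*\neq0$ for $j\ge1$, so the infinitesimal obstructions do not vanish --- this is exactly the subtlety Ueda theory is about and cannot be waved away. Moreover, Lemma \ref{Pseudoconvexity} says that $X-C$ is strongly pseudoconvex (a tubular neighbourhood of a curve with $C^2>0$ is pseudoconcave), and Lemma \ref{Extension} is used in the paper only afterwards, in Proposition \ref{P:Covering}, to extend the already constructed nonvanishing section of $L^{\otimes k}$ from $U$ to all of $X$; it is not what produces the trivialisation on $U$. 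In the paper no such formal/convergent extension is needed for the present lemma: because the cocycle of $L$ near $C$ is locally constant, triviality of the monodromy (i.e.\ triviality on $C$) immediately yields triviality on the whole neighbourhood $U$, which retracts onto $C$. The hinge of the proof is thus the flat structure on $L$ near $C$ coming from closed defining logarithmic forms and hyperbolic holonomy, a point missing from your proposal; your degree computation $\deg(L^{\otimes k}|_C)=0$ is consistent with this but, as you yourself note, insufficient on a curve of arithmetic genus one.
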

\begin{proof}
Since $\lambda$ is non-real, given a point $q\in C-\{p\}$ and a transversal $T$ to $\mathcal F$ at $q$, the corresponding holonomy group of $\mathcal F$, $Hol_{\mathcal F}\subset \mathrm{Diff}(T, q)$, is infinite cyclic, generated by an hyperbolic diffeomorphism with linear part $exp(2\pi i\lambda)$ (\cite{CS} or \cite{MM}). Hence, there exists on $T$ a $Hol_{\mathcal F}$-linearising coordinate $z$, $z(q)=0$. We extend this coordinate to a full neighbourhood of $q$ in $X$, constantly on the local leaves of $\mathcal F$. The logarithmic 1-form $\eta_q=\frac{dz}{z}$ defines $\mathcal F$, is closed, and $\eta_q\mid_T$ is $Hol_{\mathcal F}$-invariant.

By the Poincaré linearisation theorem, in a neighbourhood of $p$ the foliation is defined by a closed logarithmic  1-form $\eta_p=\frac{dz}{z}-\lambda\frac{dw}{w}$ (\cite{CS} or \cite{MM}). If $q$ is close to $p$, then $\eta_p\mid_T$ is $Hol_{\mathcal F}$-invariant.

We obtain a neighbourhood $U$ of $C$ by the union of the open sets $U_j$, such that in each $U_j$ the foliation is defined by a logarithmic 1-form $\eta_j$, with poles on $C$, which is closed and $Hol_{\mathcal F}$-invariant at the transversals. On $U_i\cap U_j$ we have $\eta_i=f_{ij}\eta_j$, $f_{ij}\in \mathcal{O}^*$. The closedness of $\eta_i$ and $\eta_j$ implies that $df_{ij}\wedge \eta_j=0$, then $f_{ij}$ is constant along the local leaves of $\mathcal F$. Moreover, $f_{ij}\mid_T$ is $Hol_{\mathcal F}$-invariant and hence constant because the holonomy is hyperbolic.

Thinking ${\eta_j}$ as local sections of $L=N_{\mathcal{F}}^{*}\otimes\mathcal{O}_{X}(C)$, then the previous property shows that $L_{\mid U}$ is defined by a locally constant cocycle. Hence, to show that ${L^{\otimes k}}_{\mid U}=\mathcal{O}_U$ it is sufficient to show that ${L^{\otimes k}}_{\mid C}=\mathcal{O}_C$. We can now use the residue of $\eta_j$ along $C$ to calculate the cocycle. For $\eta_q$ with $q\in C-\{p\}$ we can choose the 1-form to produce any non-zero residue. But we have a restriction around $p$: the residue of $\eta_p$ on one separatrix is $-\lambda$ times the residue on the other separatrix. Since $(-\lambda)^k=1$, its is clear that ${L^{\otimes k}}_{\mid C}=\mathcal{O}_C$.

\end{proof}

Also the next proposition is an easy adaptation of Brunella's argument in \cite [Chapter 3, page 61-62]{Brunella}.

\begin{prop}\label{P:Covering}
Let $\mathcal F$ be a foliation on a compact complex surface $X$ and let $C\subset X$ be a link for $\mathcal F$ with node $p\in C$. Let $\lambda$ be the quotient of eigenvalues at $p$. Suppose that $-\lambda$ is a $k^{th}$ primitive root of unit, $k>2$. Then there exists a compact surface $Z$, a transformation $f:Z\rightarrow X$, a neighbourhood $U$ of $C$ and an open set $V\subset Z$  such that  $f\mid_{V}:V\rightarrow U$ is a regular $k$-covering over $U$. Moreover, $f\mid_{V}^{-1}(C)$ is a cycle of $k$ smooth rational curves, each one with self-intersection $C^2-2$ (that is, a $(k,C^2-2)$-cycle), and the deck transformations of $f\mid_{V}$ permutes cyclically the curves in the cycle.
\end{prop}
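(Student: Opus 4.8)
The plan is to realise $f$ as the cyclic covering attached to the flat $k$-torsion line bundle $L|_U$ furnished by the preceding lemma, to identify $f^{-1}(C)$ from the topology of the nodal curve, and to compactify by extending the trivialising section to a meromorphic section over $X$, using the pseudoconvexity of $X\setminus C$.

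First I would sharpen the preceding lemma: not only is $L^{\otimes k}|_U=\mathcal O_U$, but $L|_U$ is a \emph{primitive} $k$-torsion flat line bundle. Indeed, its proof presents $L|_U$ by a locally constant cocycle, and the cocycle around the vanishing cycle of the node $p$ is the ratio of the residues of the defining closed logarithmic $1$-form on the two branches of $C$ at $p$, namely $-\lambda$ (the residues being $1$ and $-\lambda$ for $\eta_p=\tfrac{dz}{z}-\lambda\tfrac{dw}{w}$). Since $-\lambda$ is a primitive $k$-th root of unity, $L|_C$ — hence $L|_U$ — is not $m$-torsion for any $0<m<k$; equivalently $L|_C$ is the point $-\lambda$ of $\Pic^{0}(C)\cong\mathbb C^{*}$.

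Next, fix a nowhere vanishing section $s\in H^{0}(U,L^{\otimes k})$ and set $V=\{\xi\in L|_U:\ \xi^{\otimes k}=s\}$ inside the total space of $L|_U$, with $\pi:V\to U$ the projection. Since $s$ has no zeros, $\pi$ is an unramified $k$-sheeted covering, and primitivity of the torsion makes $V$ connected and $\pi$ a Galois (regular) covering with deck group $\mathbb Z/k\mathbb Z$. The pulled-back foliation $\pi^{*}(\mathcal F|_U)$ is reduced nondegenerate at each of the $k$ preimages of $p$ (an unramified pullback of a reduced nondegenerate singularity is again such) and leaves $\pi^{-1}(C)$ invariant. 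To read off $\pi^{-1}(C)$: the nodal rational curve $C$ is homotopy equivalent to $S^{2}\vee S^{1}$, so $\pi_{1}(C)\cong\mathbb Z$ and its generator has monodromy $-\lambda$ in the covering; hence $\pi^{-1}(C)$ is connected, and pulling back the normalisation $\mathbb P^{1}\to C$ shows it is a ``necklace'' of $k$ copies $\widetilde C_{1},\dots,\widetilde C_{k}$ of $\mathbb P^{1}$, with $\widetilde C_{i}\cap\widetilde C_{i+1}$ one transverse point (an unramified preimage of $p$), indices mod $k$, and $\widetilde C_{i}\cap\widetilde C_{j}=\varnothing$ otherwise (here $k>2$ is used), the deck transformations permuting the $\widetilde C_{i}$ cyclically. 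Since $\pi^{*}C=\sum_{i}\widetilde C_{i}$,
\[
k\,C^{2}=(\pi^{*}C)^{2}=\sum_{i=1}^{k}\widetilde C_{i}^{2}+2k ,
\]
and the cyclic symmetry forces all $\widetilde C_{i}^{2}$ equal, so $\widetilde C_{i}^{2}=C^{2}-2$; thus $\pi^{-1}(C)$ is a $(k,C^{2}-2)$-cycle in the sense of Definition \ref{D:Cycle}.

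It remains to compactify, the one genuinely global point and the only place the positivity $C^{2}>0$ enters. Since $C$ is irreducible with $C^{2}>0$, $X\setminus C$ is strongly pseudoconvex by Lemma \ref{Pseudoconvexity}; applying Lemma \ref{Extension} with the compact set $K=X\setminus U\subset X\setminus C$ extends $s|_{U\setminus C}$ to a meromorphic section $\widetilde s$ of $L^{\otimes k}$ over all of $X$. Let $Z$ be a desingularisation of the normalisation of the cyclic cover $\{\xi^{\otimes k}=\widetilde s\}$ of $X$ inside the total space of $L$, and $f:Z\to X$ the resulting finite map; $Z$ is compact because $X$ is. Over $U$ we have $\widetilde s=s$, which has neither zeros nor poles, so $f$ is unramified there, and taking $V\subset Z$ to be $f^{-1}(U)$ recovers $f|_{V}=\pi$ together with all the properties above. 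I expect this last step — in particular checking that the meromorphic extension can be arranged to leave $\pi$ untouched over $U$, so that $V$ genuinely sits as an open subset of the compact $Z$ — to be the main obstacle, together with the minor but essential observation that the torsion of $L|_U$ is \emph{primitive}, not merely $k$-torsion, which is what makes $V$ connected and $f^{-1}(C)\cap V$ have exactly $k$ components.
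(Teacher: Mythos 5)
Your proposal is correct and follows essentially the same route as the paper: the cyclic covering attached to the $k$-torsion bundle $L=N_{\mathcal F}^{*}\otimes\mathcal O_X(C)$ over a neighbourhood of $C$, compactified by extending the trivialising section of $L^{\otimes k}$ meromorphically to $X$ via strong pseudoconvexity of $X-C$ (Lemmas \ref{Pseudoconvexity} and \ref{Extension}) and taking the $k$-th root inside the total space of $L$, then desingularising. The only difference is that you spell out what the paper leaves to ``an easy adaptation of Brunella's argument'' — the primitivity of the torsion (hence connectedness of the cover), the necklace structure of $f^{-1}(C)$, and the computation $\widetilde C_i^2=C^2-2$ — which is a welcome addition rather than a deviation (only the passing claim that $f$ is finite should be dropped, since after resolving indeterminacies $f$ need not be finite, exactly as in the paper).
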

\begin{proof}
By the above lemma, the line bundle $L^{\otimes k}$ has a nontrivial section over $U$ without zeroes. Since $C^2>0$, the open set $X-C$ is strictly pseudoconvex by Lemma \ref{Pseudoconvexity}. Then, by Lemma \ref{Extension} , that section can be extended to the full $X$ as a global meromorphic section $s$ of $L^{\otimes k}$. Consider $E(L^{\otimes k})$ the compactification of the total space of $L^{\otimes k}$. Let $\tilde{s}$ the compactification of the graph of $s$ in $E(L^{\otimes k})$. Let $\tau:E(L)\rightarrow E(L^{\otimes k})$ be the map defined by the $k^{th}$ tensor power.

 Let $Z$ be the desingularisation of $\tau^{-1}(\tilde{s})$ and elimination of indeterminacies of the projection $\tau^{-1}(\tilde{s})\dashrightarrow X$. Take $f:Z\rightarrow X$ the induced projection.

\end{proof}

\begin{lemma}\label{L:Conjugation}
Let $p_1=(1:0:0)$, $p_2=(0:1:0)$, $p_3=(0:0:1)$ be three non collinear points in $\mathbb P^2$. Let $\gamma\in Aut(\mathbb P^2)$ given by $\gamma(z_1:z_2:z_3)=(z_3:z_1:z_2)$. If $J\in Aut(\mathbb P^2)$ is another automorphism such that $J(p_1)=p_2$, $J(p_2)=p_3$ and $J(p_3)=p_1$, then $J$ is conjugated to $\gamma$, that is, there is $g\in Aut(\mathbb P^2)$ such that $\gamma=g\circ J\circ g^{-1}$.
\end{lemma}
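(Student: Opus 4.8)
The plan is to pass to matrix representatives in $\GL_3(\mathbb{C})$ and reduce the statement to an elementary computation over an algebraically closed field; the conjugating automorphism $g$ will be produced explicitly as a diagonal matrix.

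First I would lift $J$ to a matrix $A\in\GL_3(\mathbb{C})$, well defined up to a nonzero scalar. Because $J$ sends $p_i=[e_i]$ to $p_{i+1}=[e_{i+1}]$ (indices modulo $3$), the matrix $A$ must carry the coordinate line $\mathbb{C}e_i$ to $\mathbb{C}e_{i+1}$; hence $Ae_1=\lambda_1e_2$, $Ae_2=\lambda_2e_3$, $Ae_3=\lambda_3e_1$ with $\lambda_1,\lambda_2,\lambda_3\in\mathbb{C}^{*}$, i.e.
\[
A=\begin{pmatrix} 0&0&\lambda_3\\ \lambda_1&0&0\\ 0&\lambda_2&0\end{pmatrix},
\qquad
M:=\begin{pmatrix} 0&0&1\\ 1&0&0\\ 0&1&0\end{pmatrix},
\]
where $M$ represents $\gamma$ (the case $\lambda_1=\lambda_2=\lambda_3=1$).

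Next I would look for the conjugating element among the diagonal matrices $P=\mathrm{diag}(a,b,c)$, $abc\neq0$. Conjugating $A$ by such a $P$ only rescales its three nonzero entries, and one computes that $PAP^{-1}$ carries $\tfrac{b}{a}\lambda_1,\ \tfrac{c}{b}\lambda_2,\ \tfrac{a}{c}\lambda_3$ in the slots where $A$ had $\lambda_1,\lambda_2,\lambda_3$. Thus $PAP^{-1}$ is a nonzero scalar multiple of $M$ precisely when $\tfrac{b}{a}\lambda_1=\tfrac{c}{b}\lambda_2=\tfrac{a}{c}\lambda_3$; normalizing $a=1$, this reduces to $c=b^2\lambda_1/\lambda_2$ together with $b^{3}=\lambda_2\lambda_3/\lambda_1^{2}$. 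Since $\mathbb{C}$ is algebraically closed and $\lambda_2\lambda_3/\lambda_1^{2}\neq0$, a cube root $b$ exists, $c$ is then determined and nonzero, and $g:=[P]\in\Aut(\mathbb{P}^2)$ will satisfy $g\circ J\circ g^{-1}=\gamma$ in $\mathrm{PGL}_3(\mathbb{C})=\Aut(\mathbb{P}^2)$.

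I do not expect any genuine obstacle here: the whole argument is linear algebra, and the only use of the ground field is the extraction of a cube root, which is automatic over $\mathbb{C}$. If one preferred to avoid the explicit $P$, one could instead note that $A^{3}=(\lambda_1\lambda_2\lambda_3)\,I$, so $A$ is diagonalizable with eigenvalues among the three cube roots $\mu,\mu\omega,\mu\omega^{2}$ of $\lambda_1\lambda_2\lambda_3$, where $\omega=e^{2\pi i/3}$; since $\mathrm{tr}\,A=0$ all three must actually occur, so $A$ has the same spectrum as $\mu M$ up to the common factor $\mu$, whence $A$ is conjugate to $\mu M$ and $J$ is conjugate to $\gamma$.
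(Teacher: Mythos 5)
Your proposal is correct, and both of your arguments go through: the diagonal conjugation computation is right (with $a=1$, $c=b^{2}\lambda_1/\lambda_2$ and $b^{3}=\lambda_2\lambda_3/\lambda_1^{2}$ one indeed gets $PAP^{-1}$ equal to the nonzero scalar $b\lambda_1$ times $M$), and the spectral alternative is also sound, since $A^{3}=(\lambda_1\lambda_2\lambda_3)I$ forces diagonalizability and $\operatorname{tr}A=0$ forces all three cube roots of $\lambda_1\lambda_2\lambda_3$ to occur as eigenvalues. The paper proceeds differently in the mechanics: it first normalizes the representative of $J$ so that the product of the three nonzero entries is $1$ (this is where its cube root is hidden), and then solves the intertwining equation $AJ=\gamma A$ directly, building the rows of $A$ as an orbit $a,\ y\gamma(a),\ zy\gamma^{2}(a)$ of the cyclic permutation $\gamma$; consistency of the system is exactly the normalization $xyz=1$. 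Your route buys two things: the conjugator is produced in a canonical restricted form (diagonal), so its invertibility is automatic, whereas the paper's ``take $a\neq0$'' tacitly needs $a$ not to be an eigenvector of $\gamma$ (e.g.\ $a=(1,1,1)$ would give a singular $A$), a genericity condition it does not state; and your eigenvalue argument generalizes immediately to any automorphism cyclically permuting $n+1$ points in general position in $\mathbb{P}^{n}$. The paper's version, on the other hand, exhibits the conjugation without any appeal to roots of unity or diagonalization beyond the initial scalar normalization. Both are complete elementary proofs; yours is, if anything, slightly more robust.
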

\begin{proof}
In homogeneous coordinates, $J(z_1:z_2:z_3)=(x z_3:y z_1:z z_2)$, where $xyz \neq 0$. Note that we can suppose $xyz = 1$. Since $Aut(\mathbb P^2)=PGL(3,\mathbb C)$, writing $J$ and $\gamma$ as matrices, $J=\left(
\begin{array}{ccc}
0& 0& x\\
y& 0& 0\\
0& z& 0\\
\end{array}
\right)
$ and $\gamma=\left(
\begin{array}{ccc}
0& 0& 1\\
1& 0& 0\\
0& 1& 0\\
\end{array}
\right)
$, we need to show that there is a matrix 
$ A=\left(
\begin{array}{ccc}
a_1& a_2& a_3\\
b_1& b_2& b_3\\
c_1& c_2& c_3\\
\end{array}
\right)
\in GL(3, \mathbb C)$, such that $AJ=\gamma A$ in $PGL(3,\mathbb C)$.

If $a=(a_1, a_2, a_3)$, $b=(b_1, b_2, b_3)$, $c=(c_1, c_2, c_3)$, it's easy to see that the equality $AJ=\gamma A$ is equivalent to
$x \gamma(c)=a$, $y \gamma(a)=b$, $z \gamma(b)=c$. Take $a\neq 0$ and define $b=y \gamma(a)$ and $c=z \gamma(b)=z y \gamma^2(a)$. Then the matrix $A=(a, b, c)\in GL(3, \mathbb C)$ is a solution.

\end{proof}

\begin{prop}\label{P:3-cycle}
Let $\mathcal F$ be a foliation on a compact complex surface $Z$ and let $C_1\cup C_2\cup C_3\subset Z$ be a $(3,1)$-cycle for $\mathcal F$. Suppose that there exists a birational $\mathcal F$-automorphism  $\phi:Z\dashrightarrow Z$ of order three permuting cyclically the rational curves. Then $\mathcal F$ is birational to the linear foliation $\mathcal L$ on $\mathbb P^2$ from subsection \ref{S:C^2=3} and the quotient foliation $\mathcal {F}/\phi$ is birational to  $\mathcal{F}_3=\mathcal {L}/\gamma$.
\end{prop}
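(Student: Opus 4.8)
\emph{Main plan.} The quickest route uses Brunella's theorem. Since $\langle\phi\rangle$ is finite we may replace $Z$ by a smooth model on which $\phi\in\Aut(Z)$ has order three. As $\phi$ cyclically permutes $C_1,C_2,C_3$ it has no fixed point on $C:=C_1\cup C_2\cup C_3$, hence is fixed-point-free on a $\langle\phi\rangle$-invariant neighbourhood of $C$, and the corresponding quotient is smooth there. The image $\bar C$ of $C$ is a rational curve with a single node, obtained by identifying the two corners $p_1,p_3$ lying on $C_1$ (the quotient map sends $p_1,p_2,p_3$ to the same point, since $\phi$ permutes them cyclically); it is invariant by $\bar{\mathcal F}:=\mathcal F/\phi$, its node is the unique singularity of $\bar{\mathcal F}$ on it and is reduced nondegenerate, and, the quotient being étale of degree three along $C$, one has $3\,\bar C^2=(C_1+C_2+C_3)^2=9$, so $\bar C^2=3$. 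Thus $\bar C$ is a link for $\bar{\mathcal F}$ with $\bar C^2=3$, and Theorem \ref{T:Brunella} gives $\bar{\mathcal F}\sim\mathcal F_3=\mathcal L/\gamma$. Finally $\mathcal F\sim\mathcal L$: near the invariant curves $f\colon Z\to\bar Z$ is the canonical cyclic triple cover attached by Proposition \ref{P:Covering} to $(\bar{\mathcal F},\bar C)$ (equivalently, to the $3$-torsion germ of $L=N^*_{\bar{\mathcal F}}\otimes\mathcal O(\bar C)$ along $\bar C$); applying the same construction to $\mathcal F_3$ returns $(\mathbb{P}^2,\mathcal L)$, so a birational equivalence $\bar{\mathcal F}\sim\mathcal F_3$ lifts to $\mathcal F\sim\mathcal L$. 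The only delicate point here is the compatibility of the covering construction with birational equivalence, i.e. that the triple cover is determined by the germ of $\bar{\mathcal F}$ along its link.

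\emph{A self-contained alternative.} If one prefers not to invoke Theorem \ref{T:Brunella}, one shows directly that $(Z,C,\mathcal F)$ is birationally $(\mathbb{P}^2,\text{coordinate triangle},\mathcal L)$. First, $Z$ is rational (it contains $C_1$, a smooth rational curve with $C_1^2=1>0$, and a ruled surface over a base of positive genus has no rational curve of positive self-intersection). Contract, $\langle\phi\rangle$-equivariantly, all $(-1)$-curves disjoint from $C$; this preserves the $(3,1)$-cycle. A $(-1)$-curve $E$ still meeting $C$ cannot be $\mathcal F$-invariant: it would meet $C$ at some $p_i$, but a reduced nondegenerate singularity has only the two separatrices $C_j$ through it. So $E$ is transverse to $\mathcal F$ along $C$, and balancing its tangency index against the Camacho–Sad indices of the $C_i$ — which, by the $\phi$-symmetry, equal $\lambda$ and $1/\lambda$ on each $C_i$ with $\lambda+1/\lambda=1$ — excludes it; hence $Z$ is relatively minimal, so $\mathbb{P}^2$ or some $\mathbb{F}_n$. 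On $\mathbb{F}_n$ a smooth irreducible rational curve of self-intersection $1$ exists only for $n=1$, and there it is disjoint from the $(-1)$-curve $C_0$, already contracted; so $Z\cong\mathbb{P}^2$, each $C_i$ is a line, and (a concurrent triple would give a triple point of invariant curves) the three lines are in general position. Then $\mathcal F$ leaves the coordinate triangle invariant with only its three vertices as singularities, each reduced nondegenerate; such a foliation is necessarily linear, $\omega=\lambda\,y\,dx-x\,dy$, and the Camacho–Sad relation plus the $\phi$-symmetry force $\lambda=(1\pm\sqrt{-3})/2$, so $\mathcal F\sim\mathcal L$ by the remark in subsection \ref{S:C^2=3}. (Alternatively, blow up a corner to create a curve of self-intersection $0$, apply Theorem \ref{T:Fibrations}, identify the resulting ruled surface as $\mathbb{F}_1$, and contract its $(-1)$-section.)

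\emph{The automorphism.} In either route, once $(Z,\mathcal F)\sim(\mathbb{P}^2,\mathcal L)$, $\phi$ corresponds to a birational self-map of $\mathbb{P}^2$ of order three preserving $\mathcal L$; since $\mathcal L$ has exactly its three vertices as singularities this map is biregular, lies in $\Aut(\mathbb{P}^2)$, and cyclically permutes the vertices. By Lemma \ref{L:Conjugation} some $g\in\Aut(\mathbb{P}^2)$ satisfies $g\phi g^{-1}=\gamma$; since $g$ carries $\{p_1,p_2,p_3\}$ onto the coordinate points, $g_*\mathcal L$ is again a linear $\gamma$-invariant foliation with nondegenerate vertex singularities of the correct ratio, hence equals $\mathcal L$ (up to the coordinate transposition of subsection \ref{S:C^2=3}). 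Thus $(\mathbb{P}^2,\mathcal L,\phi)$ is conjugate to $(\mathbb{P}^2,\mathcal L,\gamma)$, and therefore $\mathcal F/\phi\sim\mathcal L/\gamma=\mathcal F_3$.

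\emph{Main obstacle.} In the first route it is the canonicity of the triple cover, a germ computation along the link; in the second it is the identification of the ambient surface with $\mathbb{P}^2$ — controlling the $(-1)$-curves that meet $C$ and ruling out the Hirzebruch surfaces. Everything after the surface is fixed is bookkeeping.
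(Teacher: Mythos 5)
Your two routes are both genuinely different from the paper's proof, which is much more direct: the paper notes that $C_1,C_2,C_3$ are linearly equivalent, uses the rational map $(s_1:s_2:s_3):Z\dashrightarrow\mathbb P^2$ to carry the cycle (biregularly near $C$) onto the coordinate triangle, observes the induced foliation has degree $1$ and hence is linear, and then normalizes $\phi$ to $\gamma$ by Lemma \ref{L:Conjugation}; it never invokes Theorem \ref{T:Brunella}, minimal models, or any functoriality of coverings. In your main plan the quotient computation is fine (the corners are identified, $3\,\bar C^2=(C_1+C_2+C_3)^2=9$, the node stays reduced nondegenerate), granted that $\phi$ is biregular and fixed-point-free near $C$ (regularizing $\phi$ at indeterminacy points lying on $C$ would destroy the $(3,1)$-cycle; the paper glosses over the same point, but for you it is load-bearing in the self-intersection count) and that you resolve the quotient singularities of $Z/\langle\phi\rangle$ away from $\bar C$ before applying Theorem \ref{T:Brunella}. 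The genuine gap is exactly the step you flag and do not close: lifting $\bar{\mathcal F}\sim\mathcal F_3$ to $\mathcal F\sim\mathcal L$. Uniqueness of the connected \'etale triple cover over a \emph{fixed} neighbourhood $U$ of the link is harmless ($\pi_1(U)\cong\mathbb Z$ has a unique index-three subgroup), but Theorem \ref{T:Brunella} hands you an arbitrary birational equivalence with no control near the links; you must either show it can be chosen biregular near the two links (not automatic: blow-ups at points of the link followed by contractions are not excluded a priori) or prove that such birational maps induce birational maps of the covers. Neither is done, and this is the heart of the matter, since the other conclusion, $\mathcal F/\phi\sim\mathcal F_3$, is immediate from Brunella's theorem. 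Relying on Theorem \ref{T:Brunella} also makes the argument non-self-contained in a way the paper avoids (the paper re-derives the $C^2=3$ case from this very proposition).

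Your self-contained alternative shares the paper's endgame (linearity of the foliation plus Lemma \ref{L:Conjugation}) but reaches $\mathbb P^2$ through minimal-model considerations, and there the weak link is the claim that, after contracting the $(-1)$-curves disjoint from $C$, no $(-1)$-curve meets $C$: ``balancing its tangency index against the Camacho--Sad indices'' is not an argument as it stands, because $\tang(\mathcal F,E)$ and any index data for a non-invariant $E$ involve the behaviour of $\mathcal F$ away from $C$ (further, possibly degenerate or dicritical, singularities), which the hypotheses do not constrain at all. Without this step the passage to a minimal model may destroy the $(3,1)$-cycle, and the identification $Z\cong\mathbb P^2$ is not secured (the Hirzebruch exclusion and the final linearity/conjugation bookkeeping are fine). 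In short: both of your routes are plausible strategies, but each hinges on an unproved claim — covering functoriality under birational equivalence in the first, relative minimality in the second — that the paper's linear-system device $(s_1:s_2:s_3)$ is precisely designed to bypass.
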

\begin{proof}
We can suppose $\phi(C_1)=C_2$, $\phi(C_2)=C_3$ and $\phi(C_3)=C_1$. Take, for each $i$, a section $s_i$ of $\mathcal O_Z(C_i)$ vanishing on $C_i$. Since $C_1$, $C_2$, $C_3$ are linearly equivalent, we can define a rational map
\[
(s_1:s_2:s_3):Z----\rightarrow \mathbb P^2.
\]
It's easy to see that this map is birational and biregular in a neighbourhood of the cycle $C_1\cup C_2\cup C_3$, whose image is a cycle of three lines in $\mathbb P^2$. The induced foliation $\tilde{\mathcal{F}}$ on $\mathbb P^2$ is linear because the degree of the foliation is 1. The birational automorphism $\phi$ is mapped to a birational automorphism $\tilde \phi$ of $\mathbb P^2$ which is biregular in a neighbourhood of the three lines and hence everywhere; moreover these automorphism permutes cyclically the three lines. By Lemma \ref{L:Conjugation} $\tilde \phi$ is conjugated to the automorphism $\gamma(z_1:z_2:z_3)=(z_3:z_1:z_2)$, that is, there is $g\in Aut(\mathbb P^2)$ such that $\gamma=g\circ \tilde{\phi} \circ g^{-1}$. Since $\gamma$ is an $g_*\tilde{\mathcal{F}}$-automorphism, an easy computation shows that $g_*\tilde{\mathcal{F}}=\mathcal L$ in homogeneous coordinates $[z_1:z_2:z_3]$. In particular, $\mathcal {F}/\phi$ is birational to  $\mathcal{F}_3=\mathcal {L}/\gamma$.

\end{proof}

Analogously we can prove the following two results.
\begin{lemma}\label{L:Conjugation2}
Let $p_1=(1:0,1:0)$, $p_2=(0:1,1:0)$, $p_3=(0:1,0:1)$, $p_4=(1:0,0:1)$  be four points in $\mathbb P^1 \times \mathbb P^1$. Let $\beta\in Aut(\mathbb P^1 \times \mathbb P^1)$ given by $\beta(z_1:z_2,z_3:z_4)=(z_4:z_3,z_1:z_2)$. If $J\in Aut(\mathbb P^1 \times \mathbb P^1)$ is another automorphism such that $J(p_1)=p_2$, $J(p_2)=p_3$, $J(p_3)=p_4$ and $J(p_4)=p_1$, then $J$ is conjugated to $\beta$, that is, there is $g\in Aut(\mathbb P^1 \times \mathbb P^1)$ such that $\beta=g\circ J\circ g^{-1}$.
\end{lemma}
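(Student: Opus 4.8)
The plan is to imitate the proof of Lemma \ref{L:Conjugation}, with the one essential difference that $\Aut(\mathbb P^1\times\mathbb P^1)$, unlike $\Aut(\mathbb P^2)$, has two connected components, so one first has to pin down which component $J$ lies in.

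First I would recognize the four points as the vertices of the cycle of lines $L_1\cup L_2\cup L_3\cup L_4$ with $L_1=\mathbb P^1\times\{(1:0)\}$, $L_2=\{(0:1)\}\times\mathbb P^1$, $L_3=\mathbb P^1\times\{(0:1)\}$, $L_4=\{(1:0)\}\times\mathbb P^1$, where $p_1=L_4\cap L_1$, $p_2=L_1\cap L_2$, $p_3=L_2\cap L_3$, $p_4=L_3\cap L_4$. The pair $\{p_1,p_2\}$ lies on a common curve of type $(1,0)$ and on no common curve of type $(0,1)$, whereas the pair $\{p_2,p_3\}=J(\{p_1,p_2\})$ lies on a common curve of type $(0,1)$ and on no common curve of type $(1,0)$. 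Since an automorphism of $\mathbb P^1\times\mathbb P^1$ either preserves or interchanges the two rulings, $J$ must interchange them. Hence $J(a,b)=(\phi(b),\psi(a))$ for some $\phi,\psi\in\Aut(\mathbb P^1)=\PGL(2,\mathbb C)$.

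Next I would normalize $\phi$ and $\psi$. Writing $\infty=(1:0)$, $0=(0:1)$, so that $p_1=(\infty,\infty)$, $p_2=(0,\infty)$, $p_3=(0,0)$, $p_4=(\infty,0)$, the four conditions $J(p_i)=p_{i+1}$ say exactly that $\phi$ exchanges $0$ and $\infty$ while $\psi$ fixes $0$ and $\infty$; thus in the affine coordinate $\phi(t)=c/t$ and $\psi(t)=d\,t$ with $c,d\in\mathbb C^{*}$. (Note that $\beta$ itself is the case $c=d=1$, since $\beta(a,b)=(1/b,a)$ in these coordinates.) Finally I would look for the conjugating automorphism in the identity component, of the form $g(a,b)=(\lambda a,\mu b)$: the relation $g\circ J=\beta\circ g$ reduces to the two scalar equations $\mu d=\lambda$ and $\lambda\mu c=1$, which are solved by picking $\mu$ with $\mu^2cd=1$ and setting $\lambda=\mu d$. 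Then $\beta=g\circ J\circ g^{-1}$, which proves the lemma.

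The computation is entirely routine once the setup is in place; the only step that genuinely needs an argument is the first one, namely checking that the prescribed cyclic action on the four points forces $J$ into the ruling-interchanging component of $\Aut(\mathbb P^1\times\mathbb P^1)$. After that, Steps 2 and 3 are the $\mathbb P^1\times\mathbb P^1$-analogue of the elementary bookkeeping carried out in the proof of Lemma \ref{L:Conjugation}.
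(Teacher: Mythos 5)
Your proof is correct. The paper offers no separate argument for this lemma (it is only stated as ``analogous'' to Lemma \ref{L:Conjugation}), and your computation is exactly the intended analogue: normalize $J$ in affine coordinates and solve explicitly for a conjugating $g$ in the identity component; the one step with no counterpart in the $\mathbb P^2$ case---that the prescribed cyclic action on the four points forces $J$ to interchange the two rulings of $\mathbb P^1\times\mathbb P^1$---is precisely the point you isolate, and your argument for it is sound.
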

\qed

\begin{prop}\label{P:4-cycle}
Let $\mathcal H$ be a foliation on a compact complex surface $W$ and let $D_1\cup D_2\cup D_3\cup D_4\subset W$ be a $(4,0)$-cycle for $\mathcal H$. Suppose that there exists a birational $\mathcal H$-automorphism  $\phi:W\dashrightarrow W$ of order four permuting cyclically the rational curves. Then $\mathcal H$ is birational to the linear foliation $\mathcal M$ on $\mathbb P^1 \times \mathbb P^1$ from subsection \ref{S:C^2=2} and the quotient foliation $\mathcal {W}/\phi$ is birational to  $\mathcal{F}_2=\mathcal {M}/\beta$.
\end{prop}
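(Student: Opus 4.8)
The plan is to imitate the proof of Proposition \ref{P:3-cycle}, with $\mathbb P^2$ and its triangle of coordinate lines replaced by $\mathbb P^1\times\mathbb P^1$ and its four coordinate lines, and with Lemma \ref{L:Conjugation2} playing the role of Lemma \ref{L:Conjugation}. Normalise $\phi$ so that $\phi(D_i)=D_{i+1}$ (indices modulo $4$), and write $D_i\cap D_{i+1}=\{p_i\}$; in a $4$-cycle one has $D_1\cap D_3=D_2\cap D_4=\varnothing$, and each $D_i^2=0$ by hypothesis. First I would produce a birational model on $\mathbb P^1\times\mathbb P^1$. Since $D_1$ and $D_3$ are disjoint smooth rational curves of self-intersection $0$, Theorem \ref{T:Fibrations} yields a rational fibration $f_1:W\dashrightarrow\mathbb P^1$ having both $D_1$ and $D_3$ among its fibres (in particular $D_1\sim D_3$); similarly $D_2$ and $D_4$ are fibres of a rational fibration $f_2:W\dashrightarrow\mathbb P^1$. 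Because a generic fibre of $f_1$ meets a generic fibre of $f_2$ at a single point (as $D_1\cdot D_2=1$), the map $(f_1,f_2):W\dashrightarrow\mathbb P^1\times\mathbb P^1$ is birational; one checks that it is biregular on a neighbourhood of the cycle and that it carries $D_1,D_2,D_3,D_4$ onto the four coordinate lines $\{0\}\times\mathbb P^1$, $\mathbb P^1\times\{0\}$, $\{\infty\}\times\mathbb P^1$, $\mathbb P^1\times\{\infty\}$.

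After this transfer we are left with a foliation $\tilde{\mathcal H}$ on $\mathbb P^1\times\mathbb P^1$ leaving the four coordinate lines invariant, with reduced nondegenerate singularities precisely at the four corners, together with a birational $\tilde{\mathcal H}$-automorphism $\tilde\phi$ of order four cyclically permuting those lines. As in Proposition \ref{P:3-cycle} (where the analogous foliation on $\mathbb P^2$ is seen to have degree $1$ from its number of singularities), here $\tilde{\mathcal H}$ has bidegree $(1,1)$ — equivalently, its four singularities are accounted for — so in suitable affine coordinates $\tilde{\mathcal H}$ is given by a linear $1$-form $\lambda y\,dx-x\,dy$ for some $\lambda\in\mathbb C^{*}$; alternatively one observes that $\tilde{\mathcal H}$ is Riccati with respect to both projections (a generic fibre carries no singularity, hence cannot be invariant, for otherwise $\tilde{\mathcal H}$ would coincide with that projection and $\{y=0\}$ could not be invariant) with the four coordinate lines as its only invariant fibres, and a short normal-form computation gives the same conclusion. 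Composing $(f_1,f_2)$ with a coordinate-line-preserving automorphism we may therefore assume $\tilde{\mathcal H}$ is this linear foliation; in particular $\mathcal H$ is birational to such a foliation.

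The crucial point is to upgrade $\tilde\phi$ to an automorphism of $\mathbb P^1\times\mathbb P^1$. Both $\tilde\phi$ and $\tilde\phi^{-1}$ map each of the four coordinate lines onto a coordinate line, and $\tilde{\mathcal H}$ has isolated singularities, all of them at the four corners. If $\tilde\phi$ had an indeterminacy point $q$, a minimal resolution would produce an exceptional curve which is $\tilde{\mathcal H}$-invariant (blowing up a regular point, or a reduced nondegenerate non-dicritical singularity, of a foliation produces an invariant exceptional divisor) and which is then contracted by the resolved morphism onto a $\tilde{\mathcal H}$-invariant curve or onto a singular point of $\tilde{\mathcal H}$; confronting this with the facts that $\tilde\phi^{-1}$ sends every coordinate line to a curve (not a point) and that the only singularities are the four corners gives a contradiction — this is the mechanism behind the phrase ``and hence everywhere'' in the proof of Proposition \ref{P:3-cycle}. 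Running the same argument for $\tilde\phi^{-1}$ shows that $\tilde\phi$ contracts no curve either, so $\tilde\phi\in\Aut(\mathbb P^1\times\mathbb P^1)$.

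Finally, $\tilde\phi$ is an automorphism of order four cyclically permuting the four coordinate lines, hence it cyclically permutes the four corners $p_1,\dots,p_4$ compatibly with the cycle; after relabelling we are in the hypotheses of Lemma \ref{L:Conjugation2}, so there is $g\in\Aut(\mathbb P^1\times\mathbb P^1)$ with $g\circ\tilde\phi\circ g^{-1}=\beta$. Then $g_{*}\tilde{\mathcal H}$ is $\beta$-invariant and, being again a foliation $\lambda' y\,dx-x\,dy$ (with $\lambda'$ obtained from $\lambda$ through the finite group of coordinate-line-preserving automorphisms), the computation of subsection \ref{S:C^2=2} forces $(\lambda')^{2}=-1$, i.e. $g_{*}\tilde{\mathcal H}=\mathcal M$ for one of the two choices of $\lambda$ there (which have the same birational quotient). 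Consequently $\tilde{\mathcal H}$ — and hence $\mathcal H$ — is birational to $\mathcal M$, and $\mathcal H/\phi$ is birational to $\mathcal M/\tilde\phi\cong g_{*}\tilde{\mathcal H}/\beta=\mathcal M/\beta=\mathcal F_2$, as claimed. I expect the genuinely delicate step to be the one just described, namely ruling out indeterminacy points of $\tilde\phi$ on the coordinate lines themselves; everything else is either a routine verification or a transcription of arguments already present in the paper.
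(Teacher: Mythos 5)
Your proposal follows essentially the same route as the paper's proof: transfer everything to $\mathbb P^1\times\mathbb P^1$ by a birational map that is biregular near the cycle and sends the $D_i$ to the four coordinate lines, identify the induced foliation as a linear one $\lambda y\,dx-x\,dy$ (the paper quotes Brunella, Chapter 4, Proposition 1 for this), observe that $\tilde\phi$ becomes a biregular automorphism permuting the lines, conjugate it to $\beta$ by Lemma \ref{L:Conjugation2}, and let $\beta$-invariance force $\lambda^{2}=-1$, i.e. $g_{*}\tilde{\mathcal H}=\mathcal M$ and $\mathcal H/\phi$ birational to $\mathcal F_2$. The two places where you deviate are mild: you build the birational model as the product of two fibrations obtained from Theorem \ref{T:Fibrations} (using $D_1,D_3$ and $D_2,D_4$ as fibres), whereas the paper maps directly by sections, $(s_1:s_2,s_3:s_4)$; these are essentially the same map, and your version has the small merit of making the linear equivalences $D_1\sim D_3$, $D_2\sim D_4$ explicit. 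You also try to justify the step that the paper only asserts, namely that $\tilde\phi$ is ``biregular in a neighbourhood of the four lines and hence everywhere''.

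That attempted justification is the one soft spot. As written, your blow-up argument does not actually reach a contradiction: the invariant exceptional curve over an alleged indeterminacy point maps onto some $\tilde{\mathcal H}$-invariant algebraic curve contracted by $\tilde\phi^{-1}$, and to conclude you tacitly need that the only invariant algebraic curves are the four coordinate lines. At that stage of the argument $\lambda$ is not yet known to be $\pm\sqrt{-1}$; a reduced nondegenerate corner only excludes $\lambda\in\mathbb Q_{>0}$, and for $\lambda$ a negative rational (e.g. $d(xy)=0$) the foliation has many invariant curves and genuinely admits non-biregular birational self-maps, so the mechanism you describe would not by itself rule out indeterminacy points. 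Either exclude $\lambda\in\mathbb Q$ first, or argue as the paper implicitly does: $\tilde\phi$ is biregular on a neighbourhood $U$ of the four lines (being the transfer, by a map biregular near the cycle, of $\phi$, which acts near the cycle), and then any curve contracted by $\tilde\phi$ or $\tilde\phi^{-1}$ would have to be disjoint from $U$, hence from the union of the four lines, which is impossible since that union is an ample divisor on $\mathbb P^1\times\mathbb P^1$; so there are no contracted curves and $\tilde\phi\in\Aut(\mathbb P^1\times\mathbb P^1)$. With that repair your argument matches the paper's proof in all essentials.
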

\begin{proof}
Take, for every $i$, a section $s_i$ of $\mathcal O_Z(D_i)$ vanishing on $D_i$. We define a rational map
\[
(s_1:s_2,s_3:s_4):W----\rightarrow \mathbb P^1\times \mathbb P^1.
\]

It's easy to see that this map is birational and biregular in a neighbourhood of the cycle $D_1\cup D_2\cup D_3\cup D_4$, whose image is a cycle of four lines in $\mathbb P^1\times \mathbb P^1$. Therefore, the induced foliation $\tilde{\mathcal{H}}$ on $\mathbb P^1\times \mathbb P^1$ leaves invariant the cycle of four lines 
\[
(\mathbb P^1\times \left\{0\right\})\cup (\mathbb P^1 \times \left\{\infty\right\})\cup (\left\{0\right\} \times \mathbb P^1)\cup (\left\{\infty\right\} \times \mathbb P^1)
\]
whose singularities on the cycle are only the crossing points, each one reduced nondegenerate. According to \cite[Chapter 4, Proposition 1]{Brunella} (see also \cite{Lins-Neto} and \cite{Lins-Neto2}) we have that this foliation on $\mathbb P^1\times \mathbb P^1$ is given in affine coordinates $(x,y)$ by a linear 1-form
\[
\omega=\lambda ydx-xdy.
\]

The birational automorphism $\phi$ is mapped to a birational automorphism $\tilde \phi$ of $\mathbb P^1\times \mathbb P^1$ which is biregular in a neighbourhood of the four lines and hence everywhere; moreover these automorphism permutes cyclically the four lines. By Lemma \ref{L:Conjugation2} $\tilde \phi$ is conjugated to the automorphism $\beta(z_1:z_2,z_3:z_4)=(z_3:z_4,z_2:z_1)$, that is, there is $g\in Aut(\mathbb P^2)$ such that $\beta=g\circ \tilde{\phi} \circ g^{-1}$. Since $\beta$ is an $g_*\tilde{\mathcal{H}}$-automorphism, an easy computation shows that $g_*\tilde{\mathcal{H}}=\mathcal M$ in homogeneous coordinates $[z_1:z_2,z_3:z_4]$. In particular, $\mathcal {H}/\phi$ is birational to  $\mathcal{F}_2=\mathcal {M}/\beta$.

\end{proof}

Now we are read to finish the proof of the theorem.

\subsection{Self-intersection 1}
 Since $-\lambda$ is a $6^{th}$ primitive root of unit, by Proposition \ref{P:Covering} we obtain a covering $F:Z\longrightarrow X$, regular and of order six in a neighbourhood $U$ of $C$. The deck transformations over $U$ extend, by construction, to birational transformations of $Z$. Let $\alpha:Z\dashrightarrow Z$ be the extended deck transformation of order six.
 
Now, we lift $\mathcal F$ to $Z$ via $F$, obtaining a new foliation $\mathcal G$ which leaves invariant six smooth rational curves $C_i$, $i=1$,..., $6$, forming a cycle over $C$. We have $C_i^2=-1$, because $C^2=1$. The only singularities of $\mathcal G$ at the cycle are the six crossing points, all reduced nondegererate as well as $p$.

We can contract three disjoint ($-1$)-curves of the cycle, say $C_1, C_3$ and $C_5$, obtaining a foliation $(\tilde{\mathcal G},\tilde{Z})$ birational to $(\mathcal G, Z)$. Note that $\tilde{\mathcal G}$ has an invariant cycle of three smooth rational curves with self-intersection 1. Furthermore, $\alpha^2=\alpha\circ \alpha$ induces a  birational $\tilde{\mathcal G}$-automorphism that permutes cyclically this cycle. Therefore, by Proposition \ref{P:3-cycle},  $\tilde{\mathcal G}$ is birational to the linear foliation $\mathcal{L}$ on $\mathbb P^2$ given in subsection \ref{S:C^2=3}. In the same way, contracting the three disjoint ($-1$)-curves $C_2, C_4$ and $C_6$, we also obtain a foliation birational to $(\mathcal{L},\mathbb P^2)$. Then $\alpha:Z\dashrightarrow Z$ induces a $\mathcal L$-automorphism $\tilde{\alpha}:\mathbb P^2\rightarrow \mathbb P^2$. Since $\tilde{\alpha}$ is unique up to conjugation (Lemma \ref{P:3-cycle}), the same is true for $\alpha$ . Therefore $\mathcal F$ is birational to the foliation $\mathcal F_1$ from subsection \ref{S:C^2=1}.

\subsection{Self-intersection 2}
In this case, $-\lambda$ is a $4^{th}$ primitive root of unit. By Lemma \ref{P:Covering} we have a covering $G:W\longrightarrow X$, which is regular and of order 4 on a neighbourhood of $C$. Lifting $\mathcal F$ to $W$, we obtain a foliation $\mathcal H$ which leaves invariant four smooth rational curves $D_i$, $i=1$,..., $4$, forming a cycle over $C$. Analogously, $D_i^2=0$, because $C^2=1$. The only singularities of $\mathcal H$ at the cycle are the four crossing points, all reduced nondegererate as well as $p$. Hence Proposition \ref{P:4-cycle} implies that $\mathcal F$ is birational to $\mathcal F_2$.

%Since $\mathcal Z(\mathcal H, D_i)=2$ we can conclude that $\mathcal H$ is Riccati with respect to two rational fibrations (each one transversal to the other) by \cite[Chapter 4, Proposition 1]{Brunella}. Take, for every $i$, a section $s_i$ of $\mathcal O_Z(D_i)$ vanishing on $D_i$. We can now define a rational map
%\[
%(s_1:s_2,s_3:s_4):W----\rightarrow \mathbb P^1\times \mathbb P^1.
%\]
%It's easy to see that this map is birational and biregular in a neighborhood of the cycle $D_1\cup D_2\cup D_3\cup D_4$.

%Therefore, the foliation is obtained in the following way. Take a foliation on $\mathbb P^1\times \mathbb P^1$ that leaves invariant the cycle of four lines 
%\[
%(\mathbb P^1\times \left\{0\right\})\cup (\mathbb P^1 \times \left\{\infty\right\})\cup (\left\{0\right\} \times \mathbb P^1)\cup (\left\{\infty\right\} \times \mathbb P^1)
%\]
%whose singularities on the cycle are only the crossing points, each one reduced nondegenerate, and an automorphism of order 4 of the foliation that permutes cyclically theses lines. According to \cite[Chapter 4, Proposition 1]{Brunella} (see also \cite{Lins-Neto} and \cite{Lins-Neto2}) we have that this foliation on $\mathbb P^1\times \mathbb P^1$ is given in affine coordinates $(x,y)$ by the linear 1-form
%\[
%\omega=\lambda ydx-xdy.
%\]
%It's easy to see that the automorphism is then (up to conjugation)
%\[
%(u:v,z:w)\mapsto (z:w,v:u).
%\]
%Then the quotient foliation by that automorphism is the desired foliation. Therefore $\mathcal F$ is birational to the foliation $\mathcal F_2$.

\subsection{Self-intersection 3}
This case is covered by Theorem \ref{T:Brunella}. Anyway, the proof is just Lemma \ref{P:Covering} plus Proposition \ref{P:3-cycle}.

\subsection{Self-intersection 4}
In this case, $\lambda=1$, therefore $p$ is a dicritical linerizable singularity (in particular, after a blow-up at $p$, the self-intersection of the strict transform of $C$ is $C^2-4=0$, so we obtain a rational fibration over $\mathbb P^1$) by \cite{CS} or \cite{MM}. But, since $\lambda$ is rational positive, the foliation is not reduced nondegenerate at $p$, hence this case is not possible in our assumptions.

\subsection{Self-intersection greater than 4}
Since $k>4$ we have that $\lambda$ is a positive irrational number, hence the singularity is non-dicritical linerizable.

After $k$ suitable blow-ups the self-intersection of the strict transform of $C$ is $\tilde{C}^2=C^2-4-k+1=n-3-k$ (the first blow-up at $p$ and the following blow-ups at one of the two singular points of the foliation in the strict transform of $C$). Therefore, after $n-3$ blow-ups we obtain $\tilde{C}^2=0$. Let $\sigma :\tilde{X}\longrightarrow X$ be the transformation obtained by composing theses blow-ups, $\tilde{C}=\sigma^{*}(C)$, $E=\sigma^{-1}(p)=C_1+...+C_{(n-3)}$, where the $C_i$ are rational curves, with $C_{1}^2=-1$ and $C_j^2=-2$ if $j>1$, and $\tilde{\mathcal F}=\sigma^{*}(\mathcal F)$. Since $\mathcal Z(\tilde{\mathcal F}, \tilde{C})=2$, $\tilde{\mathcal F}$ is a Riccati foliation with respect to a fibration $\pi:\tilde{X}\longrightarrow B$, where $B$ is a smooth curve (by \cite[Chapter 4, Proposition 1]{Brunella}). We can suppose that the fibration has connected fibres. Since the exceptional divisor $E$ is a union of smooth rational curves, the base $B$ is a smooth rational curve.

Let $q=C_1\cap C_2$, which is a singularity of the foliation,  and $R$ the fibre (possibly singular) through $q$. By Proposition \ref{P:Riccati}, $R$ must by supported on $E$, which is impossible, since $E$ has negative definite matrix of intersection.

\qed

\bibliographystyle{amsplain}

\begin{thebibliography}{9}
\frenchspacing

\bibitem{BPV}
{\sc W. Bart, C. Peters and A. Van de Ven},
\emph{Compact complex surfaces.}
Springer-Verlag, 2003.


\bibitem{Brunella}
{\sc M. Brunella},
\emph{Birational geometry of foliations.}
Monograf\'{\i}as de Matem\'{a}tica. Instituto de Matem\'{a}tica Pura e Aplicada (IMPA), Rio de Janeiro, 2000. 138 pp.

\bibitem{Brunella2}
{\sc M. Brunella},
\emph{Minimal Models of Foliated Algebraic Surfaces.}
Bull. Soc. math. France, {\bf 127}, (1999), 289–305.

\bibitem{CS}
{\sc C. Camacho, P. Sad},
\emph{Pontos singulares de equações diferenciais analíticas.}
Monograf\'{\i}as de Matem\'{a}tica. Instituto de Matem\'{a}tica Pura e Aplicada (IMPA), Rio de Janeiro, 1987.

%\bibitem{Gaeljvp}
%{\sc G. Cousin, J.V. Pereira},
%\emph{Transversely affine foliations on projective manifolds}.
%To appear in Mathematical Research Letters (2013).

\bibitem{Hartshorne}
{\sc R. Hartshorne},
\emph{Algebraic Geometry}. Graduate Texts in Mathematics {\bf 52}, Springer (1998).

\bibitem{Hartshorne2}
{\sc R. Hartshorne},
\emph{Ample Subvarieties of Algebraic Varieties}.
Lecture Notes in Mathematics {\bf 156}, Springer-Verlag (1970).

\bibitem{Lazarsfeld}
{\sc R. Lazarsfeld},
\emph{Positivity in Algebraic Geometry}.
University of Michigan (2001).

\bibitem{Lins-Neto} {\sc A. Lins-Neto}, \emph{Construction of singular holomorphic vector fields and foliations in dimension two.} J. Differential Geometry, {\bf 26} (1987) 1-31.

\bibitem{Lins-Neto2} {\sc A. Lins-Neto, B. Scárdua}, \emph{Introdução à Teoria das Folheações Algébricas Complexas.} Monograf\'{\i}as de Matem\'{a}tica. Instituto de Matem\'{a}tica Pura e Aplicada (IMPA), Rio de Janeiro, 2011.

%\bibitem{Frankpseudo}
%{\sc F. Loray}, \emph{Pseudo-groupe d'une singularit\'{e} de feuilletage holomorphe en dimension deux.}
%HAL:hal-00016434, version 1. \url{http://hal.archives-ouvertes.fr/ccsd-00016434}.

\bibitem{MM}
{\sc J.-F. Mattei, R. Moussu}, \emph{Holonomie et intégrales premières.}
Ann. Sci., ENS, \textbf{13} (1980), 469-523.

\bibitem{Neeman}
{\sc  A. Neeman},
\emph{Ueda Theory: Theorems and problems.}
AMS Memoires, \textbf{415} (1989).

\bibitem{Ohsawa}
{\sc T. Ohsawa},
\emph{Vanishing theorems on complete Kahler manifolds.}
PubL RIMS, Kyoto Univ., \textbf{20} (1984), 21-38.

\bibitem{jvpJAG}
{\sc J. V. Pereira},
\emph{Fibrations, divisors and transcendental leaves. With an appendix by Laurent Meersseman.}
J. Algebraic Geom., \textbf{15} (2006), no. 1, 87-110.

\bibitem{jvpSanchez}
{\sc J.V. Pereira, P.F Sanchez},
\emph {Transformation groups of holomorphic foliations.} Commun. Anal. Geom. \textbf{10}(5), 1115-1123 (2002).

\bibitem{Suzuki}
{\sc O. Suzuki},
\emph{Neighborhoods of a Compact Non-Singular Algebraic Curve Imbedded in a 2-Dimensional Complex Manifold.}
Publ RIMS, Kyoto Univ., \textbf{11} (1975), 185-199.

\bibitem{Ueda}
{\sc T. Ueda},
\emph{Neighborhood of a Rational Curve with a Node.}
Publ RIMS, Kyoto Univ., \textbf{27} (1991), 681-693.


\end{thebibliography}

\end{document}